\newcommand{\pedro}{\ifthenelse{\boolean{pedro}}{\color{magenta}
    \setboolean{pedro}{false}}{\color{black}\setboolean{pedro}{true}}}
\newcounter{margin}
\DeclareMathOperator{\Inv}{Inv}
\DeclareMathOperator{\NI}{NI}
\newcommand{\javier}{\ifthenelse{\boolean{javier}}{\color{red}
    \setboolean{javier}{false}}{\color{black}\setboolean{javier}{true}}}
\newtheorem{pro}{Proposition}[section]
\newtheorem{teo}{Theorem}[section]
\newtheorem*{main}{Main Theorem}
\newtheorem{cor}{Corollary}[section]
\theoremstyle{remark}
\newtheorem{rem}{Remark}[section]
\theoremstyle{definition}
\newtheorem{defi}{Definition}[section]
\date{\today}
\begin{document}

\subjclass[2010]{32S05, 32S65, 14H20}
\keywords{ holomorphic foliation, Poincar\'{e} problem, plane branch, Puiseux parametrization}

\title{The local Poincar\'e problem for irreducible branches}


\author{J.Cano}
\address{Dpt. de \'{A}lgebra, An\'{a}lisis, etc. Univ. de Valladolid}
\author{P. Fortuny Ayuso}
\address{Dpt. de Matem\'{a}ticas, Univ. de Oviedo}
\author{and J. Rib\'{o}n}
\address{Dpt. de An\'{a}lise, Univ. Federal Fluminense}
\thanks{The authors are partially supported
  by MTM2016-77642-C2-1-P (AEI/FEDER, UE)}

\begin{abstract}
Let ${\mathcal F}$ be a germ of holomorphic foliation defined in a neighborhood of the origin of
${\mathbb C}^{2}$ that has a germ of irreducible holomorphic invariant curve $\gamma$. 
We provide a lower bound for the vanishing multiplicity of ${\mathcal F}$ at the origin 
in terms of the equisingularity class of $\gamma$. Moreover, we show that such a lower bound is
sharp. Finally, we characterize the types of dicritical singularities for which the multiplicity of $\mathcal{F}$ can be bounded in terms of that of $\gamma$ and provide an explicit bound in this case.
\end{abstract}

\maketitle

\bibliographystyle{plain}


\section{Introduction}

The Poincar\'{e} problem consists in determining whether or not an algebraic
differential equation in two variables has a rational first integral and then
calculating such a function. This problem is strongly related with the problem of
finding upper bounds for the degree of an invariant variety in terms of the degree of
the foliation and other invariants. Indeed Poincar\'{e} noticed that the existence of
rational first integrals is algebraically decidable if an upper bound for the degree
çof a generic algebraic leaf is provided.

We study in this paper the local version of the Poincar\'{e} problem: consider a germ
of holomorphic foliation ${\mathcal F}$ defined in a neighborhood of the origin of
${\mathbb C}^{2}$. Let $\gamma$ be a plane branch (i.e. a germ of irreducible
analytic curve at the origin) that is invariant by ${\mathcal F}$.  We
obtain lower bounds for the vanishing multiplicity $\nu_0 ({\mathcal F})$ (also
order) of ${\mathcal F}$ (cf. Definition \ref{def:order}) in terms of geometrical
data of $\gamma$. The lower bound provided by the  Main Theorem 
does not depend on the  foliation, not  even on its desingularization.
Other results (Proposition \ref{pro:calam}) give improved lower bounds of 
$\nu_0 ({\mathcal F})$ in terms of 
the multiplicity (order) $\nu_0 (\gamma)$ (cf. Definition \ref{def:order}) at the origin and
combinatorial data of ${\mathcal F}$.

There is a strong connection between
the local and the global Poincar\'{e} problems. This is an important ingredient in
the work of Cerveau and Lins Neto \cite{CL:iac} where they show that an invariant
invariant curve $C$ of a foliation ${\mathcal F}$ in ${\mathbb C}{\mathbb P} (2)$
satisfies $\deg (C) \leq \deg ({\mathcal F})+2$ if all singularities of $C$ are
nodal, i.e. normal crossings singularities. The same formula was proved by Carnicer
for the case in which ${\mathcal F}$ has no dicritical singularities on $C$ without
restrictions on the singularities of $C$ \cite{car}.  Du Plessis and Wall show that
the degree of an invariant curve $C$ can be bounded from above by a function of the
degree of the foliation ${\mathcal F}$ and local equisingular data of $C$ (for
instance, the sum of the Milnor numbers of $C$ at its singular points)
\cite{Plessis-Wall}. Recently, several papers have appeared in which the point of
view of local polar invariants has been used to study the Poincar\'{e} problem
(Corral and Fern\'{a}ndez-S\'{a}nchez) \cite{Corral-Fernandez:isolated}, (F. Cano,
Corral and Mol) \cite{Cano-Corral-Mol:polar} and (Genzmer and Mol)
\cite{Genzmer-Mol:polar}.

The global Poincar\'{e} problem is an active subject of research. Some recent
contributions have been provided by Brunella \cite{Brunella:indices}, Campillo and
Carnicer \cite{Campillo-Carnicer:Poincare}, Campillo and Olivares
\cite{Campillo-Olivares:base}, Soares \cite{Soares:annals}, Pereira
\cite{Pereira:Poincare}, Esteves and Kleiman \cite{Esteves-Kleiman:bounds}, Cavalier
and Lehmann \cite{Cavalier-Lehmann:inequality}, Galindo and Monserrat
\cite{Galindo-Monserrat:dicritical}...

In the local setting, the vanishing orders satisfy
$\nu_0 ({\mathcal F}) \geq \nu_0 (\gamma) -1$ if the foliation ${\mathcal F}$ is
non-dicritical, i.e. has finitely many invariant branches, by a theorem of Camacho,
Lins Neto and Sad \cite{Camacho-Lins-Sad:topological}.  Indeed, the inequality
becomes an equality $\nu_0 ({\mathcal F}) = \nu_0 (\Gamma) -1$ when $\Gamma$ is the
union of all the invariant branches through the origin and ${\mathcal F}$ is a
generalized curve, i.e. there are no saddle-nodes singularities in its
desingularization process \cite{Camacho-Lins-Sad:topological}.

A strict statement of the local Poincar\'{e} problem might be: given a branch $\gamma$
that is invariant by a germ of holomorphic foliation ${\mathcal F}$, find a function (if it exists) $h: {\mathbb N} \to {\mathbb N} \cup \{0\}$ such that
\begin{equation}
\label{equ:goal}
\nu_0 ({\mathcal F}) \geq h (\nu_0 (\gamma)) \ \mathrm{with} \ \lim_{m \to \infty} h(m)=\infty.
\end{equation}
This is equivalent to the existence of a function
$g:{\mathbb N} \cup \{0\} \to {\mathbb N} $ such that
$\nu_0 (\gamma) \leq g (\nu_0 ({\mathcal F}))$.  Indeed, given $h$ we can define
$$g(m) =  \max \{ n \in {\mathbb N}:  h(n) \leq m \}  . $$
For instance $h(m)=m-1$ is such a function if ${\mathcal F}$ is non-dicritical. Such
a formula does not exist in general: given $p, q \in {\mathbb N}$ with
$\gcd (p,q)=1$, the curve $y^{p} - x^{q}=0$ is an invariant curve of order
$\min (p,q)$ that is invariant by the foliation $p x dy - q y dx=0$ of order $1$.  So
it is not possible to obtain $h$ satisfying Equation (\ref{equ:goal}) even if we
replace $\nu_0 (\gamma)$ by the Milnor number of $\gamma$.  This kind of pathology
was described in the global case by Lins Neto.  He even exhibits examples of
one-parameter families of foliations of fixed degree such that neither the degree of
the generic leaf nor its genus is bounded among the foliations in the family with
rational first integral \cite{LN:Poin}.

However, as the topology of $\gamma$ is given by its Puiseux characteristics, it is
natural to try to obtain lower bounds for $\nu_0 ({\mathcal F})$ in terms of the
equisingularity class of $\gamma$.  Indeed, our main result is a formula of the same
type as Equation (\ref{equ:goal}) in which the right hand side depends on the
topological class of $\gamma$. More precisely,
we provide a solution of the local Poincar\'{e} problem for irreducible curves
in which the multiplicity of the curve
is replaced with another topological and equisingular invariant:
the next to last partial multiplicity of $\gamma$.

Consider an injective Puiseux parametrization
$\gamma (t) =  (t^{n} , \sum_{j = n}^{\infty} a_j t^j)$ of $\gamma$ and let
$m_j = \gcd (\{n \} \cup \{ n \leq k \leq j : a_k \neq 0 \})$.
The cardinal of the finite set  ${\mathcal M}:=\{ n/m_j : j \geq n\} \setminus \{1\}$ is
by definition the genus $g$ of $\gamma$; it is equal to $0$ if and only if $\gamma$ is regular.
We can order the elements of ${\mathcal M}$ in a sequence
$$q_0 = 1 < q_1 < q_2 < \hdots < q_g = n .$$
The last element $q_g$ is the multiplicity $n$ of $\gamma$ at the
origin, whereas $q_0, q_1, \hdots, q_g$ are by definition the ``partial multiplicities" of $\gamma$.
We define the {\it virtual multiplicity} $\mu (\gamma)$ as the next to last partial
multiplicity $q_{g-1}$.
\begin{main}
Let ${\mathcal F}$ be a germ of singular holomorphic foliation, defined in a
neighborhood of the origin in ${\mathbb C}^{2}$,
with a singular invariant branch $\gamma$. Then we have
$\nu_0 ({\mathcal F}) \geq \mu (\gamma) = q_{g-1} \geq 2^{g-1}$.
\end{main}
The lower bound $\mu(\gamma)$ for $\nu_0 ({\mathcal F})$ is sharp (Remark \ref{rem:sharp}).
The next to last partial multiplicity $q_{g-1}$ is equal to $1$ for the curve $y^{p} = x^{q}$.
The Main Theorem is very practical to obtain families of curves such that Property
(\ref{equ:goal}) is satisfied: consider the curves with Puiseux parametrization
$$ \gamma_n (t) = (t^{30 n}, t^{30n +30} + t^{30n +45} + t^{30n + 55 } + t^{30n + 56}) $$
for $n \geq 2$. We have $g=4$ and $q_1= n$, $q_2=2n$, $q_3=6n$ and $q_4 = 30n$.
As a consequence, $\nu_0 ({\mathcal F}) \geq 6n = \frac{\nu_0 (\gamma_n)}{5}$
for any germ of foliation that preserves $\gamma_n$.

The lower estimate for $\nu_0 ({\mathcal F})$ provided by the Main Theorem depends
only on the equisingularity class of $\gamma$. If, on top of that, we know the list
of irreducible components of the divisor of the desingularization of $\gamma$ which
are invariant for $\mathcal{F}$, then we give a better lower estimate for
$\nu_0 ({\mathcal F})$ in terms of all partial multiplicities of $\gamma$
(Proposition \ref{pro:calam} and in particular Equation (\ref{equ:calam2})). This
formula validates the study of the local Poincar\'{e} problem via Puiseux
characteristics of invariant branches.

The main ingredient of the proof of $\nu_0 ({\mathcal F}) \geq \nu_0 (\gamma) -1$ in
the non-dicritical case is an index formula \cite{Camacho-Lins-Sad:topological}
relating $\nu_0 ({\mathcal F})$ with some vanishing indices that appear along the
desingularization process.  The index formula admits a generalization for any kind of
foliation, as was discovered by Hertling \cite{Hertling:formules}. This formula contains two major terms, an ``index term'' and a term that depends on the
combinatorics of the desingularization process.  It is not difficult to give lower
bounds for either the index part or the combinatorics part, but we achieve to relate
the properties of both parts of Hertling's formula to obtain better lower
estimates of $\nu_0 ({\mathcal F})$. For instance, we can show that the existence of
non-invariant irreducible components of the divisor of the desingularization process
worsens the lower estimates of $\nu_0 ({\mathcal F})$ only in very specific cases
that can be completely characterized (Proposition \ref{pro:minlam}).  In particular
we can classify the case in which the worst lower estimates are obtained (Theorem
\ref{teo:bound-H-all-denoms}).

Our techniques can be used to obtain results for the case of reducible germs of invariant curves 
and for the global Poincar\'{e} problem. These subjects will be pursued in future works.
\section{The index formula}

Let $\gamma$ be a germ of irreducible analytic curve  that is
invariant by a germ of local holomorphic foliation ${\mathcal F}$ defined in a neighborhood of
$0$ in ${\mathbb C}^2$. 
Before looking into the joint behaviour of the invariants of $\gamma$ and $\mathcal{F}$,
we introduce Hertling's formula \cite{Hertling:formules}
that relates the multipicity of the singularity with indexes of the foliation
localized at points of the divisor of a desingularization process.
The setting of this section is the following: we consider a neighbourhood of a point $P$ belonging to a (germ of) $2$-dimensional analytic complex manifold $M$, endowed with a map $\pi:M\rightarrow (\mathbb{C}^2, 0)$, which is a composition of blow-ups (right now we do not need to specify its structure). We call $\mathcal{F}'$ the strict transform of $\mathcal{F}$ by $\pi$. The point $P$ belongs to the exceptional divisor $P\in E = \pi^{-1}(0,0)$. The main invariants we shall need are given by the following definitions:
\begin{defi} 
\label{def:order}
Let $f \in {\mathbb C} \{x_1,\hdots,x_n\} \setminus \{0\}$ be a convergent power series with
complex coefficients  in $n$ variables. We say that $\nu_0 (f)$ is the {\it (vanishing) order} of $f$
at $0$ (or also its {\it (vanishing) multiplicity}) if
$f \in {\mathfrak m}^{\nu_0 (f)} \setminus  {\mathfrak m}^{\nu_0 (f)+1}$ where
${\mathfrak m}$ is the maximal ideal of  ${\mathbb C}  \{x_1,\hdots,x_n\} $.

Let $X= \sum_{i=1}^{n} a_i (x_1, \hdots, x_n) \partial / \partial_{x_n}$ be a germ
of holomorphic vector field. We define its order at $0$ as
$\nu_0 (X)= \min_{1 \leq i \leq n} \nu_0 (a_i)$.  Given a germ of holomorphic foliation
${\mathcal F}$ at $({\mathbb C}^2,0)$, whose leaves are the trajectories of a vector field
$X=a(x,y)\partial/\partial x + b(x,y)\partial/\partial y$ (with $\gcd (a,b)=1$),
we define $\nu_0 ({\mathcal F})= \nu_0 (X)$.

Given a germ of holomorphic curve $\gamma$ at $({\mathbb C}^2,0)$, we define
$\nu_0 (\gamma)= \nu_0 (f)$ where $(f)\subset\mathbb{C}\{x,y\}$ is the ideal
$I(\gamma)$ of analytic functions whose restriction to $\gamma$ is the zero
function. The vanishing order of $g \in {\mathbb C}\{x,y\}$ along $\gamma$ at $0$ is
the unique $\nu \in {\mathbb N} \cup \{0\}$ such that
$g \in I(\gamma)^{\nu} \setminus I(\gamma)^{\nu +1}$.

\end{defi}
\begin{defi}\label{def:weight}  
Let $P$ be a point of an irreducible component $D$ of the exceptional divisor $E$ and
$g  \in {\mathbb C}\{x,y\}$. We define the vanishing order of $g \circ \pi$ along $D$
as the vanishing order of $g \circ \pi$ along $D$ at $P$
(it is clearly independent of the choice of $P$). 
The {\it weight} $w (D)$ of an irreducible component of the exceptional divisor $E$ is the
vanishing order of $l \circ \pi$ along $D$ for a generic linear map $l: {\mathbb C}^{2} \to {\mathbb C}$.
\end{defi}

\begin{rem}
The weight of the divisor of the blow-up of the origin is equal to $1$.
Consider a point $P$ of the {exceptional} divisor of a sequence of blow-ups $\pi$.
If $P$ is not a corner point (i.e. it does not belong to two irreducible components of $E$) and belongs to the irreducible {component} $D_1$, then the weight of the irreducible component $\overline{D}$ corresponding to the blow-up of $P$ is $w (\overline{D})=w (D_1)$. If, on the contrary, $P$ is a corner point belonging to two irreducible components $D_1$ and $D_2$ of $E$, then the weight $w(\overline{D})$ of the new irreducible component of the divisor $\overline{D}$ is $w(\overline{D})=w(D_1)+w(D_2)$.
\end{rem}

Fix a system of local coordinates $(x,y)$ at $P$ and consider a vector field $X=a(x,y)\partial/\partial x + b(x,y)\partial/\partial y$ whose corresponding foliation is $\mathcal{F}$, with $\gcd(a,b)=1$. Let $\eta:(\mathbb{C},0)\rightarrow M$ be an analytic curve through $P$, which \emph{in this section} will always be an irreducible component $D$ of $E$ (this the reason we use $\eta$ instead of $\gamma$).

\begin{defi}[See \cite{Camacho-Lins-Sad:topological}]\label{def:aleph}
The vanishing order $\aleph_{P} ({\mathcal F}, \eta)$ of
${\mathcal F}$ along and invariant curve $\eta$ at $P$
is the order of the holomorphic vector field $\eta^{*} X$ at $0$.
\end{defi}
\begin{rem}
The index $\aleph_{P} ({\mathcal F}, \eta)$ coincides with the GSV
(G\'{o}mez Mont-Seade-Verjovsky) index when $\eta$ is smooth \cite{zbMATH04196386}.
\end{rem}
\begin{defi}[Ibid.]
\label{def:kapinv}
Assume $P$ is a singular point of ${\mathcal F}'$ and $D_1$ is an invariant irreducible component of $E$ with $P \in D_1$. We define $\kappa_{P} ({\mathcal F}', D_1)= \aleph_{P}({\mathcal F}', D_1)-1$ if $P$ is a corner point and the other irreducible component $D_2$ of the divisor $\pi^{-1}(0,0)$
through $P$ is also ${\mathcal F}'$-invariant and
$\kappa_{P} ({\mathcal F}', D_1)= \aleph_{P}({\mathcal F}', D_1)$ otherwise.
\end{defi}
\begin{defi}[Ibid.]
Assume $\eta$ is not invariant by ${\mathcal F}$. If $f=0$ is an irreducible equation of $\eta$, we define $\mathrm{tang}_{0}({\mathcal F},\eta)$ as the dimension of the complex vector space ${\mathcal O}_{2}/(f, X(f))$.
\end{defi}

\begin{defi}
Assume $P$ is a singular point of ${\mathcal F}'$ and $D_1$ is a non-invariant irreducible component
of $E$ with $P \in D_1$, 
we define $\kappa_{P}  ({\mathcal F}', D_1) =\mathrm{tang}_{P}({\mathcal F}', D_1)$. 
\end{defi}

\begin{rem}\label{rem:order-formula-for-non-dicritical}
The following formula was established in \cite{Camacho-Lins-Sad:topological} for a non-dicritical foliation $\mathcal{F}$ at $(\mathbb{C}^2,0)$ and any sequence $\pi$ of point blow-ups:
\begin{equation}
\label{equ:funfor}
\sum_{D_j} \sum_{P \in D_j} w(D_j) \kappa_P ({\mathcal F}', D_j) = \nu_{0} ({\mathcal F}) + 1,
\end{equation}
where $D_1, D_2, \hdots$ are the irreducible components of the divisor of $\pi$, ${\mathcal F}'$ is the strict transform of ${\mathcal F}$ by $\pi$ and
$\nu_{0} ({\mathcal F})$ is the multiplicity of ${\mathcal F}$ at $(0,0)$.
\end{rem}

\begin{defi}
Consider the setting of Remark \ref{rem:order-formula-for-non-dicritical}. 
We define $v (D_j)$ as the {\it valence} of the irreducible component $D_j$ of the divisor of
$\pi$, i.e. $v (D_j)$ is the number of irreducible components of the divisor of $\pi$, different
than $D_j$ and intersecting $D_j$. We define the {\it non-dicritical valence} $v_{\overline{d}} (D_j)$
by considering just the invariant components of the divisor  intersecting $D_j$.  
\end{defi}

Formula (\ref{equ:funfor}) was generalized by Hertling to the general case.

\begin{pro}[\cite{Hertling:formules}]
\label{pro:hertling}
Consider the setting of Remark \ref{rem:order-formula-for-non-dicritical}. Then the formula
\begin{equation}
\label{equ:Hertling}
\nu_0 ({\mathcal F})  = \sum_{D_j} \sum_{P \in D_j} w(D_j) \kappa_P ({\mathcal F}', D_j)  +
\sum_{D_j\mathrm{\ non-invariant}} w(D_j)  (2 - v_{\overline{d}} (D_j)) -1
\end{equation} 
holds.
\end{pro}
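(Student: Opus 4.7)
The plan is to prove the formula by induction on the number $n$ of point blow-ups making up $\pi$, using Remark \ref{rem:order-formula-for-non-dicritical} as the input for the non-dicritical building block. For $n=1$, only one exceptional divisor $D$ appears, with $w(D)=1$. If $\mathcal{F}$ is non-dicritical, $D$ is invariant, the sum over non-invariant components is empty, and formula (\ref{equ:funfor}) gives $\sum_{P\in D}\kappa_P(\mathcal{F}',D)=\nu_0(\mathcal{F})+1$, so the right-hand side of (\ref{equ:Hertling}) is $(\nu_0(\mathcal{F})+1)+0-1=\nu_0(\mathcal{F})$. If $\mathcal{F}$ is dicritical, then $D$ is non-invariant and isolated, so $v_{\overline d}(D)=0$; the sum of tangencies $\sum_{P\in D}\mathrm{tang}_P(\mathcal{F}',D)$ equals $\nu_0(\mathcal{F})-1$ by a direct local computation, since the tangency points of $\mathcal{F}'$ with $D\cong\mathbb{P}^1$ correspond to the $\nu_0(\mathcal{F})-1$ zeros (counted with multiplicity) of the radial factor of the leading homogeneous part of the defining vector field. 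Thus $(\nu_0(\mathcal{F})-1)+2-1=\nu_0(\mathcal{F})$, and the base case holds.

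For the inductive step, suppose the formula is established for $\pi$ and let $\pi'=\pi\circ\sigma_P$ be the composition with the blow-up $\sigma_P$ of a point $P$ in the exceptional divisor of $\pi$. Write $\overline{D}$ for the new divisor and $\mathcal{F}''$ for the strict transform of $\mathcal{F}$ by $\pi'$. Since $\nu_0(\mathcal{F})$ is unchanged, it suffices to show that the right-hand side of (\ref{equ:Hertling}) is invariant under this additional blow-up. The weight of $\overline{D}$ is $w(D_1)$ when $P$ is a smooth point of $D_1$, and $w(D_1)+w(D_2)$ when $P$ is a corner of $D_1\cap D_2$, by the rule recalled after Definition \ref{def:weight}.

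The verification then splits into cases according to (i) whether $P$ is a smooth point or a corner of the divisor of $\pi$; (ii) the invariance under $\mathcal{F}'$ of the divisor components through $P$; and (iii) the invariance of the new divisor $\overline{D}$ under $\mathcal{F}''$. In each case we match up three simultaneous variations: how the indices $\kappa$ at the points of $\pi'^{-1}(P)$ sum to the old $\kappa_P(\mathcal{F}',D_j)$, using the additive behavior of the $\aleph$-index under a single blow-up and the $-1$ correction of Definition \ref{def:kapinv} when the complementary branch is invariant; how the new divisor $\overline{D}$ contributes with its weight (invariant case) or its $w(\overline{D})(2-v_{\overline d}(\overline{D}))$ term (non-invariant case); and how the non-dicritical valences $v_{\overline d}(D_j)$ of the components adjacent to $P$ change by $\pm 1$. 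In each case the bookkeeping reduces to a short identity that uses the weight rule together with the local transformation law for the tangency index $\mathrm{tang}$ at a smooth point of a non-invariant curve.

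The main obstacle will lie in the mixed cases where $P$ is a corner between an invariant and a non-invariant component, since there the $\aleph$-type and $\mathrm{tang}$-type definitions of $\kappa$ interleave with the combinatorial correction $w(D_j)(2-v_{\overline d}(D_j))$, and one must be careful that passing from $v_{\overline d}(D_j)$ to $v_{\overline d}(D_j)+1$ (creation of a new neighbor $\overline{D}$ of the right invariance type) compensates exactly the change in $\kappa_P$. Once this case is cleanly handled, the remaining cases (smooth non-corner points in each of the four invariance combinations, corners with uniform invariance type) follow by the same template, completing the induction.
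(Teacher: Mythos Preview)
The paper does not prove Proposition~\ref{pro:hertling}: it is quoted from Hertling's paper \cite{Hertling:formules} and used as a black box, so there is no ``paper's own proof'' to compare against.

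Your outline is the standard way to establish such index formulas and is sound in spirit: induction on the number of blow-ups, with the base case handled by the classical non-dicritical formula (\ref{equ:funfor}) and a direct tangency count in the dicritical case, and the inductive step reduced to checking that the right-hand side of (\ref{equ:Hertling}) is invariant under one further blow-up. The base case is fine; in particular your claim $\sum_{P\in D}\mathrm{tang}_P(\mathcal{F}',D)=\nu_0(\mathcal{F})-1$ for a dicritical first blow-up is the well-known identity coming from the factorization of the leading jet through the Euler field.

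That said, what you have written is a proof \emph{plan}, not a proof. The entire inductive step is deferred to ``a short identity'' and ``the same template'' without actually writing down any of the local transformation laws you invoke. Concretely, to make this a proof you need, for each of the (roughly eight) invariance/corner cases, the explicit formula relating $\aleph_{P}(\mathcal{F}',D_j)$ (or $\mathrm{tang}_P(\mathcal{F}',D_j)$) to the corresponding quantities on $\sigma_P^{-1}(P)$, and then the one-line cancellation with the change in $v_{\overline d}$ and the new $w(\overline D)$ term. None of these identities is hard, but none is stated either, and the mixed invariant/non-invariant corner case you flag as ``the main obstacle'' is precisely where an off-by-one in Definition~\ref{def:kapinv} would bite. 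If you want this to stand as a self-contained proof rather than a pointer to \cite{Hertling:formules}, you must carry out at least that case in full.
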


\section{The Local Poincar\'{e} Problem}
We proceed to study the joint behaviour of the invariants associated to
a singular foliation ${\mathcal F}$ that has an invariant singular analytic branch $\gamma$
and the desingularization of $\gamma$.  Let us stress that we do not consider the
desingularization of ${\mathcal F}$.

  In order to do so, let us fix some
notation to be used in what follows. The curve
$\gamma:(\mathbb{C},0)\rightarrow (\mathbb{C}^2,0)$ is a separatrix of the
singular foliation $\mathcal{F}$ in $(\mathbb{C}^2,0)$. 
\begin{defi}
\label{def:genus}
The genus of $\gamma$ is the number of Puiseux
characteristic exponents in a reduced parametrization.
\end{defi}
We assume, henceforward, that $\gamma$ is singular, i.e. $g \geq 1$. 
Its desingularization requires at least $1$ blow-up (as a matter of fact, at least
$3$ but this is not relevant). We fix the sequence
{$\pi=\pi_{1}\circ \pi_2 \circ \dots \circ \pi_{k}$} of point blow-ups which
desingularizes $\gamma_0=\gamma$ and denote $P_0=(0,0)$ and for
$l=0,\dots, k-1$, $\gamma_{l+1}$ the strict transform of $\gamma_l$ by
{$\pi_{l+1}$,} $P_{l+1}$ the point defined
by $\gamma_{l+1}$ in {$D_{l+1} = \pi_{l+1}^{-1}(P_l)$.}  Set
$\mathcal{F}_0=\mathcal{F}$ and $\mathcal{F}_{l+1}$ the strict transform of
$\mathcal{F}_l$ by {$\pi_{l+1}$.}  Finally, for the sake of simplicity, we shall
denote ${\mathcal F}' = {\mathcal F}_k$ and
$\tilde{\pi}_{l+1}=\pi_{1}\circ \dots \circ \pi_{l+1}$. See Figure \ref{fig:notation} for a guide.

\begin{figure}[h!]
  \centering
    \begin{tikzpicture}[scale=0.6,every circle
      node/.style={draw,scale=0.6,fill=white},
     fixed point arithmetic={scale results=10^-6}]
\draw[line width=1.5pt,color=gray,domain=0:1] plot({\x*\x}, {\x*\x*\x});
\draw[line width=1.5pt,color=gray,domain=0:1] plot({\x*\x}, {-\x*\x*\x});
\node[anchor=south] at (1,0.9) {$\gamma_0=\gamma$};
\draw (-1,0) -- (1,0);
\draw (0,-1) -- (0,1);
\draw[<-] (1.5,0) -- (3,0);
\node[anchor=south] at (2.25,0) {$\pi_1$} ;
\draw[line width=1pt] (4,0) arc(0:20:4);
\draw[line width=1pt] (4,0) arc(0:-20:4) node[anchor=north] {$D_1$};
\draw[line width=1.5pt,color=gray,domain=0:1] plot({4+\x*\x},{\x});
\draw[line width=1.5pt,color=gray,domain=0:1] plot({4+\x*\x},{-\x});
\node[anchor=south] at (5,0.9) {$\gamma_1$};
\draw[line width=1pt] (8,0.5) arc(0:20:4)node[anchor=south] {$D_1$};
\draw[line width=1pt] (8,0.5) arc(0:-25:4);
\draw[line width=1pt] (9,-0.5) arc(90:115:4);
\draw[line width=1pt] (9,-0.5) arc(90:70:4) node[anchor=west] {$D_2$};
\draw[line width=1.5pt,color=gray,domain=0:2] plot({7.3+\x},{-1.2+\x});
\node[anchor=south] at (9.3,0.7) {$\gamma_2$};
\draw[<-] (5.5,0) -- (7,0);
\node[anchor=south] at (6.25,0) {$\pi_2$} ;
\draw[line width=1pt] (14,1) arc(0:20:4) node[anchor=south] {$D_1$};
\draw[line width=1pt] (14,1) arc(0:-20:4);

\draw[line width=1pt] (15,-1) arc(45:70:4);
\draw[line width=1pt] (15,-1) arc(45:35:4)node[anchor=north] {$D_3$};
\draw[line width=1pt] (16.8,-1) arc(90:115:4);
\draw[line width=1pt] (16.8,-1) arc(90:70:4) node[anchor=south] {$D_2$};
\draw[line width=1.5pt,color=gray,domain=0:2] plot({14.2+\x},{-1.2+\x});
\node[anchor=south] at (16.2,0.7) {$\gamma_3$};
\draw[<-] (11.5,0) -- (13,0);
\node[anchor=south] at (12.25,0) {$\pi_3$} ;
\end{tikzpicture}
  \caption{Notation for the blow-ups, exceptional divisors and strict transforms of the curve $\gamma\equiv(t^2, t^3)$. The points $P_i$ are the intersections of $\gamma_i$ with $D_i$.}
  \label{fig:notation}
\end{figure}
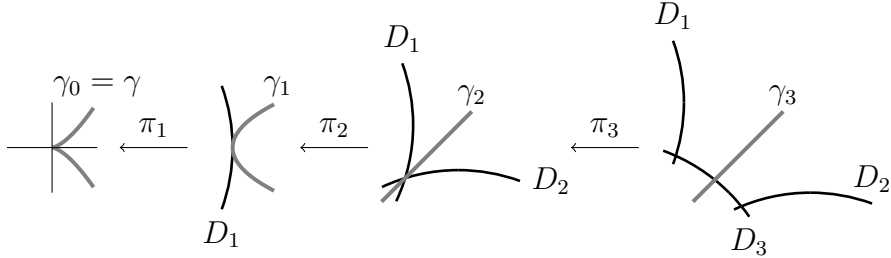

Let us remark that besides being regular we require $\gamma_k$ to intersect a
single irreducible component of $\pi^{-1}(0,0)$ and moreover that such
intersection is transversal.  Given an irreducible equation $f=0$ of $\gamma$,
this is equivalent to the function $f \circ \pi$ having only normal  crossings
singularities.
\begin{rem}
\label{rem:mult}
Let us apply Formula \eqref{equ:funfor} to the foliation $df=0$. Since $f \circ \pi =0$ has normal
crossings singularities, we obtain the equality
$w(D_k)= \nu_0 (df) +1 = \nu_0 (\gamma)$. Thus the last weight is the multiplicity at the origin of the
curve $\gamma$.
\end{rem}

Next, let us focus on the term
\[ \sum_{D_j} \sum_{P \in D_j} w(D_j) \kappa_P ({\mathcal F}', D_j)  \]
in Formula \eqref{equ:Hertling}.
\begin{defi}\label{def:inv-E}
  Given the exceptional divisor $E = \pi^{-1}(0,0)$, $\Inv(E)$ denotes the union of the irreducible
components of $E$ which are invariant for $\mathcal{F}'$.
We denote by ${\mathcal I}_{\mathcal F}$ (or ${\mathcal I}$ if ${\mathcal F}$ and $\gamma$ are
implicit) the connected components of $\Inv(E)$ and by $\tilde{\mathcal I}_{\mathcal F}$
or $\tilde{\mathcal I}$ the connected components of $\Inv(E)$ that do not contain
the last divisor $D_k$.
\end{defi}
\begin{rem}
Given a connected component $H$ of $\Inv(E)$ sometimes we consider it as a set of points and
so we write $P \in H$ and some other times as a set of divisors and then
we write $D \in H$.
\end{rem}
Figure \ref{fig:inv-E} illustrates this definition.
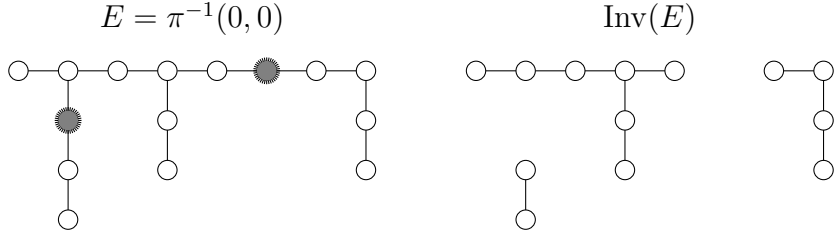
\begin{figure}[h!]
  \centering
  \begin{tabular}{cc}
   \begin{tikzpicture}[scale=0.66,every circle node/.style={draw,scale=0.66,fill=white},
  fixed point arithmetic={scale results=10^-6}]
  \draw[color=black] (0,0)node[circle,radius=1.5pt,color=black] {} -- (1,0)
  node(P1)[circle] {}-- (1,-1)node(Q1)[circle]{} -- (1,-2)node[circle]{} --
  (1,-3)node[circle]{};
  \draw[color=black] (P1) -- (2,0)node[circle]{} -- (3,0) node(P2)[circle] {} --
  (3, -1)node[circle]{} -- (3,-2)node[circle]{};
  \draw[color=black] (P2) -- (4,0)node[circle]{} -- (5,0)node(Q2)[circle]{}--(6,0)
  node[circle]{} --
  (7,0)node(P3)[circle]{}--(7,-1)node[circle]{}--(7,-2)node[circle]{};
  \fill[gray] (Q1) circle[radius=0.2];
  \fill[gray] (Q2) circle[radius=0.2];
  \foreach \i in {0,10,...,350}
  \tikzmath{\x=cos(\i)*0.27;\y=sin(\i)*0.27;}
  \draw[color=black] (Q1) -- +(\x,\y);
  \foreach \i in {0,10,...,350}
  \tikzmath{\x=cos(\i)*0.27;\y=sin(\i)*0.27;}
  \draw[color=black] (Q2) -- +(\x,\y);
  \draw[color=black] (3.5,0.5) node[anchor=south]{$E=\pi^{-1}(0,0)$};
\end{tikzpicture}\hspace*{10pt}
    &
      \hspace*{10pt}
      \begin{tikzpicture}[scale=0.66,every circle node/.style={draw,fill=white,scale=0.66},
  fixed point arithmetic={scale results=10^-6}]
  \draw[color=black] (0,0)node[circle,radius=1.5pt,color=black] {} -- (1,0)
  node(P1)[circle] {};
  \draw[color=black] (1,-2)node[circle]{} --
  (1,-3)node[circle]{};
  \draw[color=black] (P1) -- (2,0)node[circle]{} -- (3,0) node(P2)[circle] {} --
  (3, -1)node[circle]{} -- (3,-2)node[circle]{};
  \draw[color=black] (P2) -- (4,0)node[circle]{};
  \draw[color=black] (6,0) node[circle]{} -- (7,0)node(P3)[circle]{}
  --(7,-1)node[circle]{}--(7,-2)node[circle]{};
    \draw[color=black] (3.5,0.5) node[anchor=south]{$\Inv(E)$};
\end{tikzpicture}
  \end{tabular}
  \caption{For the exceptional divisor $E$ on the left (which has two dicritical irreducible components), the set $\Inv(E)$ is the one on the right, with three connected components.}
  \label{fig:inv-E}
\end{figure}
\begin{rem}
We have $\sharp ({\mathcal I} \setminus \tilde{\mathcal I}) \in \{1,0\}$ depending on whether or not
$D_k$ is invariant.
\end{rem}
\begin{defi}\label{def:weight-component}
Given a connected component $H$ of $\Inv(E)$, its \emph{weight} $w(H)$ is the minimum
of the weights of its irreducible components.
\end{defi}
\begin{pro}
\label{pro:toma}
Let $H$ be a connected component of $\Inv(E)$. Then
\begin{equation*}
  \sum_{\substack{D \in {H}\\P \in D}} \kappa_{P} ({\mathcal F}',D) w(D) \geq w(H) .
\end{equation*}
\end{pro}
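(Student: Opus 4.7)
My plan is to localize the problem by applying Hertling's formula (Proposition~\ref{pro:hertling}) at an intermediate blow-up point and then combine with the non-negativity of multiplicity to extract the contribution from $H$. First, let $D^{*}\in H$ be the divisor of $H$ created earliest in the sequence $\pi$, at some step $l^{*}$. As a preliminary, I would establish directly from the weight creation rules that $w(D^{*})=w(H)$: every divisor in $H$ distinct from $D^{*}$ is created later by a chain of blow-ups descending from $D^{*}$, and since non-corner blow-ups preserve weights and corner blow-ups add weights, the weight function is non-decreasing along such chains, so $w(D)\geq w(D^{*})$ for all $D\in H$.

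Next, let $Q^{*}\in M_{l^{*}-1}$ be the center of $\pi_{l^{*}}$. Regard $\mathcal{F}_{l^{*}-1}$ as a germ at $Q^{*}$, and apply Proposition~\ref{pro:hertling} to it with respect to the subsequence $\pi^{+}$ of $\pi_{l^{*}},\ldots,\pi_{k}$ whose centers lie over $Q^{*}$. This subsequence creates exactly the divisors descending from $D^{*}$, including all of $H$. Let $\widetilde{w}$ denote the weights computed with respect to $\pi^{+}$ (so $\widetilde{w}(D^{*})=1$) and $\widetilde{v}_{\overline{d}}$ the corresponding non-dicritical valences. Hertling's formula then reads
\begin{equation*}
\nu_{Q^{*}}(\mathcal{F}_{l^{*}-1})+1=\sum_{D\in\pi^{+}}\sum_{P\in D}\widetilde{w}(D)\,\kappa_{P}(\mathcal{F}',D)+\sum_{\substack{D\in\pi^{+}\\ D\text{ non-inv}}}\widetilde{w}(D)\bigl(2-\widetilde{v}_{\overline{d}}(D)\bigr).
\end{equation*}

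The third step is the weight comparison $w(D)\geq\widetilde{w}(D)\cdot w(H)$ for every $D\in\pi^{+}$, once more proved by induction on the blow-up rules, starting from $w(D^{*})=w(H)$ and $\widetilde{w}(D^{*})=1$. Observing that $\kappa_{P}(\mathcal{F}',D)\geq 0$ at every point $P$ of an invariant component $D$ (at a corner of two invariant components, $\mathcal{F}'$ is forced to be singular, so $\aleph_{P}\geq 1$, while elsewhere $\kappa_{P}=\aleph_{P}\geq 0$), and using $\nu_{Q^{*}}(\mathcal{F}_{l^{*}-1})\geq 0$, the displayed identity produces a lower bound on the weighted sum of indices over $\pi^{+}$. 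Multiplying by $w(H)$ and keeping only the invariant terms from $H$ then yields the target $\sum_{D\in H,\,P\in D}w(D)\,\kappa_{P}(\mathcal{F}',D)\geq w(H)$.

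The principal obstacle is handling the non-invariant correction $\widetilde{w}(D)\bigl(2-\widetilde{v}_{\overline{d}}(D)\bigr)$ in the local Hertling formula: this term becomes negative whenever a non-invariant divisor of $\pi^{+}$ has local non-dicritical valence exceeding $2$, and bounding its net contribution against the sum concentrated on $H$ requires a combinatorial analysis of how dicritical components of $E$ sit adjacent to the connected component $H$.
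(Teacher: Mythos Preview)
Your approach is entirely different from the paper's, and the obstacle you name in the last paragraph is a genuine gap, not a loose end. The paper never invokes Hertling's formula for this proposition. Instead it argues by contradiction: if every $\kappa_{P}(\mathcal{F}',D)$ with $D\in H$ vanished, then at each corner of $H$ the foliation has the local form $xa\,\partial_x+yb\,\partial_y$ with $a(0,0)b(0,0)\neq 0$, so the two Camacho--Sad indices there are nonzero and mutually reciprocal. Combining this reciprocity at corners with the negativity of the intersection form restricted to $H$ (following Toma \cite{Toma}), one locates a singular point $Q$ lying on a \emph{single} component $D\in H$ with $\mathrm{CS}(Q,D,\mathcal{F}')\neq 0$; such a $Q$ is a non-corner of $H$, so $\kappa_{Q}=\aleph_{Q}\geq 1$, the desired contradiction. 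Since some term in the sum is then at least $w(D)\geq w(H)$, the inequality follows in one line.

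In your scheme the local Hertling identity at $Q^{*}$ reads $1\leq A+B+C+N$, where $A=\sum_{D\in H}\tilde w(D)\,\kappa_{P}$ is the piece you want bounded below by $1$, while $B$ (indices on the other invariant components among $D_{l^{*}},\ldots,D_k$), $C$ (tangencies along non-invariant components), and $N$ (the valence correction) are the rest. You have $B,C\geq 0$, so deducing $A\geq 1$ requires $N\leq-(B+C)$, an \emph{upper} bound on $N$. Nothing in your outline supplies one, and the later combinatorics of the paper (bad divisors, Propositions \ref{pro:minlam}--\ref{pro:calam}) go the other way: they bound the analogous quantity $\Lambda_{\mathcal{F},\mathcal{N}}$ from \emph{below}, and they already consume Proposition \ref{pro:toma} as input. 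So as written the argument is incomplete, and completing it along these lines appears to require an independent idea rather than routine bookkeeping.
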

\begin{proof}
It suffices to show that one of the indices in the sum is greater than $0$. Assume the contrary. Then, at every corner $P$ of $H$ the foliation ${\mathcal F}'$ is given, in local coordinates centered at $P$, by a vector field
\begin{equation*}
  x a(x,y) \frac{\partial}{\partial x} + y b(x,y) \frac{\partial}{\partial y}
\end{equation*}
where $xy=0$ is the local equation of $H$ at $P$. Since $\kappa_{P} ({\mathcal F}',x=0)$ and $\kappa_{P} ({\mathcal F}',y=0)$ are equal to $0$ by hypothesis, we get $\aleph_{P} ({\mathcal F}', x=0)=\aleph_{P} ({\mathcal F}', y=0)=1$. We deduce that $a(0,0) \neq 0 \neq b(0,0)$.
{By definition the Camacho-Sad index $\mathrm{CS} (P, x=0, {\mathcal F}')$
\cite{Cam-Sad}
of ${\mathcal F}'$ at $P$ along the invariant curve $x=0$ is the residue at $0$ of the diferential
form $\frac{a(0,y)}{y b(0,y)} dy$ and hence it is equal to $a(0,0)/b(0,0)$.
Analogously we obtain
$\mathrm{CS} (P, y=0, {\mathcal F}') =b(0,0)/a(0,0)$. So the Camacho-Sad indices at corners
satisfy
\begin{equation}
\label{equ:csc}
\mathrm{CS} (P, x=0, {\mathcal F}') \cdot \mathrm{CS} (P, y=0, {\mathcal F}') = 1 .
\end{equation}}
It is also known that the intersection form on divisors is negative
{(cf. \cite{Toma}) as is its
restriction to any connected component of $\Inv(E)$.
This property and Equation \eqref{equ:csc} on all corners allow us to find}
a point $Q \in H$ which belongs to a single irreducible component $D\in H$ for which $\mathrm{CS} (Q, D, {\mathcal F}') \neq 0$, following Toma \cite{Toma}. As $Q$ is a singular point of $\mathcal{F}'$, we get $\kappa_{Q} ({\mathcal F}', D) =  \aleph_{Q} ({\mathcal F}', D) \geq 1$, which provides the contradiction.
\end{proof}

We start now the joint study of the structure of the desingularization of $\gamma$ and Formula
\eqref{equ:Hertling} for $\mathcal{F}$. The reader should keep in mind Figure \ref{fig:first-graph},
which shows the desingularization graph for an invariant branch $\gamma$ in which possible
dicritical divisors of $\mathcal{F}'$ appear. Notice, however, that $\mathcal{F}'$ needs not have
any simple singularity in $E$ (not even the point at which $\gamma_k$ meets $E$ transversely).

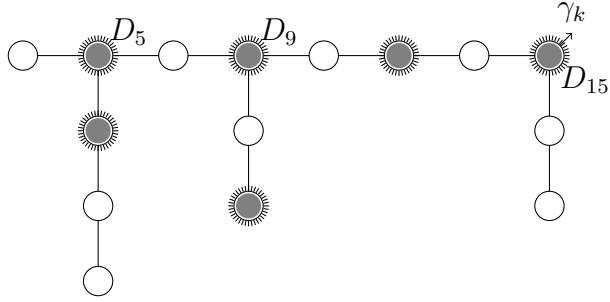
\begin{figure}[h!]
  \centering
  \begin{tikzpicture}[every circle node/.style={draw,fill=white},
  fixed point arithmetic={scale results=10^-6}]
  \draw[color=black] (0,0)node[circle,radius=3pt,color=black] {} -- (1,0)
  node(P1)[circle] {}-- (1,-1)node(Q1)[circle]{} -- (1,-2)node[circle]{} --
  (1,-3)node[circle]{};
  \draw (P1) -- (2,0)node[circle]{} -- (3,0) node(P2)[circle] {} --
  (3, -1)node[circle]{} -- (3,-2)node(Q3)[circle]{};
  \draw (P2) -- (4,0)node[circle]{} -- (5,0)node(Q2)[circle]{}--(6,0)
  node[circle]{} --
  (7,0)node(P3)[circle]{}--(7,-1)node[circle]{}--(7,-2)node[circle]{};
  \fill[gray] (Q1) circle[radius=0.1666];
  \fill[gray] (Q2) circle[radius=0.1666];
   \fill[gray] (Q3) circle[radius=0.1666];
   \fill[gray] (P1) circle[radius=0.1666];
   \fill[gray] (P2) circle[radius=0.1666];
   \fill[gray] (P3) circle[radius=0.1666];
  \draw[color=black,->] (P3) -- +(0.3,0.3) node[anchor=south] {$\gamma_k$};
  \foreach \i in {0,10,...,350}
  \tikzmath{\x=cos(\i)*0.27;\y=sin(\i)*0.27;}
  \draw[color=black] (Q1) -- +(\x,\y);
  \foreach \i in {0,10,...,350}
  \tikzmath{\x=cos(\i)*0.27;\y=sin(\i)*0.27;}
  \draw[color=black] (Q2) -- +(\x,\y);
   \foreach \i in {0,10,...,350}
  \tikzmath{\x=cos(\i)*0.27;\y=sin(\i)*0.27;}
  \draw[color=black] (Q3) -- +(\x,\y);
    \foreach \i in {0,10,...,350}
  \tikzmath{\x=cos(\i)*0.27;\y=sin(\i)*0.27;}
  \draw[color=black] (P1) -- +(\x,\y);
   \foreach \i in {0,10,...,350}
  \tikzmath{\x=cos(\i)*0.27;\y=sin(\i)*0.27;}
  \draw[color=black] (P2) -- +(\x,\y);
   \foreach \i in {0,10,...,350}
  \tikzmath{\x=cos(\i)*0.27;\y=sin(\i)*0.27;}
  \draw[color=black] (P3) -- +(\x,\y);
  \node[anchor=south west] at(P1) {$D_5$};
  \node[anchor=south west] at (P2) {$D_9$};
  \node[anchor=north west] at (P3) {$D_{15}$};
\end{tikzpicture}
\caption{Desingularization graph of $\gamma$ showing the dicritical divisors (in
  grey) of $\mathcal{F}'$.}
\label{fig:first-graph}
\end{figure}

Let us try to obtain a simple lower estimate for $\nu_{0} ({\mathcal F})$ from Hertling's formula
(\ref{equ:Hertling}).
\begin{defi}
We call $\NI(E)$ the set of non-invariant irreducible components $D$ of $E$ such
that there is $Q\in D$ with $\mathcal{F}'$
not transverse to $D$ at $Q$, i.e. $ \mathrm{tang}_Q ({\mathcal F}', D) \geq 1$.
\end{defi}
Hertling's formula implies the inequality:
\begin{equation*}
\begin{split}
  \nu_0 ({\mathcal F})  \geq  \sum_{D_j \not \subset \mathrm{Inv}(E)} \sum_{P \in D_j} w(D_j)
  \mathrm{tang}_P ({\mathcal F}', D_j) + \\
  \sum_{H \in {\mathcal I}} \sum_{D_j \in H} \sum_{P \in D_j} w(D_j) \kappa_P ({\mathcal F}', D_j)  +\\
\sum_{D_j\not\subset\mathrm{Inv}(E)} w(D_j)  (2 - v_{\overline{d}} (D_j)) -1
\end{split}
\end{equation*}
The contribution  of
$\sum_{D_j \in H} \sum_{P \in D_j} w(D_j) \kappa_P ({\mathcal F}', D_j)$ for $H \in \tilde{\mathcal I}$
is at least $w(H)$ by Proposition \ref{pro:toma}. Moreover we have
\begin{equation*}
\sum_{D_j \in H} \sum_{P \in D_j} w(D_j) \kappa_P ({\mathcal F}', D_j) \geq
w (D_k) = \nu_{0} (\gamma)
\end{equation*}
for $H \in {\mathcal I} \setminus \tilde{\mathcal I}$
since $\gamma_k$ intersects $D_k$ at a singular non-corner point if
$\mathcal{I}\setminus \tilde{\mathcal{I}}\neq \emptyset$.
We obtain
\begin{equation*}
\begin{split}
\nu_0 ({\mathcal F})  \geq  \sum_{D_j \not \subset \mathrm{Inv}(E)} \sum_{P \in D_j} w(D_j)
\mathrm{tang}_P ({\mathcal F}', D_j) +  \sum_{H \in \tilde{\mathcal I}} w(H) +  \\
\sharp ( {\mathcal I} \setminus \tilde{\mathcal I}) \nu_{0}(\gamma) +
\sum_{D_j \not \subset \mathrm{Inv}(E)} w(D_j)  (2 - v_{\overline{d}} (D_j)) -1 .
\end{split}
\end{equation*}
In order to make the estimate simpler
let us change slightly the definition of non-dicritical valence.
\begin{defi}
We define $\overline{v}_{\overline{d}} (D_k) = v_{\overline{d}} (D_k)+1$ and
$\overline{v}_{\overline{d}} (D_j) = v_{\overline{d}} (D_j)$ if $j < k$.
\end{defi}
The idea behind the definition is that for the last irreducible component of $E$
the curve $\gamma_k$ can be considered as an invariant divisor.
\begin{defi}\label{def:estimate}
We define
\begin{equation*}
\Lambda_{\mathcal{F}, {\mathcal I}} =  \sum_{H \in \tilde{\mathcal I}} w(H), \ \
\Lambda_{\mathcal{F}, {\mathcal N}}=
\sum_{D_j\not\subset\mathrm{Inv}(E)} w(D_j)  (2 - \overline{v}_{\overline{d}} (D_j))
\end{equation*}
and $\Lambda_{\mathcal{F}} =  \nu_{0}(\gamma) -1 +  \Lambda_{\mathcal{F}, {\mathcal I}}  +
\Lambda_{\mathcal{F}, {\mathcal N}}$.
\end{defi}
The term $\Lambda_{\mathcal{F}}$ is a lower bound for the multiplicity $\nu_0 ({\mathcal F})$.
It contains two parts, namely $\Lambda_{\mathcal{F}, {\mathcal I}}$ that provides a lower bound
for the contribution of invariant divisors in Hertling's formula and
$\Lambda_{\mathcal{F}, {\mathcal N}}$ that has an analogous role for non-invariant divisors.
The following proposition is a consequence of the above discussion.
\begin{pro} \label{pro:estimate}
The following sequence of inequalities holds:
\begin{equation*}
  \nu_0 ({\mathcal F}) \geq \Lambda_{{\mathcal F}} +
  \sum_{\substack{D\not \subset \Inv(E)\\P \in D}} w(D)
  \mathrm{tang}_P ({\mathcal F}_k, D) \geq  \Lambda_{{\mathcal F}}
+ \sum_{D \in \NI(E)} w(D) \geq \Lambda_{{\mathcal F}} .
\end{equation*}
\end{pro}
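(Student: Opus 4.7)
The proof should be a direct consequence of Hertling's formula (Proposition \ref{pro:hertling}) combined with Proposition \ref{pro:toma}, and most of the calculation is essentially carried out in the discussion preceding the statement. The plan is to collect that discussion into a formal argument, taking care with the role of $D_k$.

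First I would start from Hertling's equality and split the index sum $\sum_{D_j} \sum_{P\in D_j} w(D_j)\kappa_P(\mathcal F',D_j)$ according to whether $D_j \subset \Inv(E)$ or not. For the invariant part I would regroup the inner sums by connected components $H \in \mathcal I$, and apply Proposition \ref{pro:toma} to bound each such block from below by $w(H)$. For the non-invariant part I would use that, by definition, $\kappa_P({\mathcal F}',D_j)=\mathrm{tang}_P({\mathcal F}',D_j)$, so this contribution equals $\sum_{D_j\not\subset\Inv(E)}\sum_{P\in D_j} w(D_j)\,\mathrm{tang}_P(\mathcal F',D_j)$.

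Next I would handle the special role of $D_k$. Two cases arise. If $D_k$ is invariant, then the connected component $H_k \in \mathcal I \setminus \tilde{\mathcal I}$ containing $D_k$ contributes at least $\nu_0(\gamma)=w(D_k)$, because at the point $P_k$ where $\gamma_k$ meets $D_k$ transversally, the foliation $\mathcal F'$ has two distinct invariant curves ($D_k$ and $\gamma_k$) crossing transversally, hence $\mathcal F'$ is singular there, $P_k$ is non-corner, and $\kappa_{P_k}(\mathcal F',D_k)=\aleph_{P_k}(\mathcal F',D_k)\geq 1$. If, on the contrary, $D_k$ is non-invariant, then $\sharp(\mathcal I\setminus\tilde{\mathcal I})=0$, but the modified definition $\overline v_{\overline d}(D_k)=v_{\overline d}(D_k)+1$ precisely subtracts a summand $w(D_k)=\nu_0(\gamma)$ from $\Lambda_{\mathcal F,\mathcal N}$ relative to the Hertling sum over non-invariant divisors. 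So in both cases the bookkeeping produces exactly the same combination, yielding
\begin{equation*}
\nu_0(\mathcal F) \geq \Lambda_{\mathcal F} + \sum_{D_j\not\subset\Inv(E)} \sum_{P\in D_j} w(D_j)\,\mathrm{tang}_P(\mathcal F', D_j),
\end{equation*}
which is the first inequality.

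For the second inequality, I would observe that for any non-invariant $D$ not belonging to $\NI(E)$ the tangency orders $\mathrm{tang}_P(\mathcal F',D)$ vanish identically, while for each $D\in\NI(E)$ there is, by definition of $\NI(E)$, some point $Q\in D$ with $\mathrm{tang}_Q(\mathcal F',D)\geq 1$, so $\sum_{P\in D}\mathrm{tang}_P(\mathcal F',D)\geq 1$. Summing weighted by $w(D)$ yields the middle inequality. The last inequality is immediate since weights are positive and $\NI(E)$ is a (possibly empty) finite set. The only point requiring real care is the unified treatment of $D_k$ across the two cases and the verification that the replacement of $v_{\overline d}$ by $\overline v_{\overline d}$ correctly absorbs the missing $\nu_0(\gamma)$ when $D_k$ is non-invariant; the rest is straightforward bookkeeping on Hertling's identity.
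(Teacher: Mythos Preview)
Your argument is correct and matches the paper's approach exactly: the paper states that the proposition ``is a consequence of the above discussion,'' and that discussion is precisely the one you have reconstructed---splitting Hertling's formula into invariant and non-invariant blocks, applying Proposition~\ref{pro:toma} to each $H\in\tilde{\mathcal I}$, extracting at least $w(D_k)=\nu_0(\gamma)$ from the block containing $D_k$ when it is invariant, and absorbing the corresponding $\nu_0(\gamma)$ via the shift $\overline v_{\overline d}(D_k)=v_{\overline d}(D_k)+1$ when it is not. Your handling of the two remaining inequalities via the definition of $\NI(E)$ is also what the paper intends.
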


\begin{defi}\label{def:characteristicdivisor}
An irreducible component $D_l$ of the divisor is {\it a characteristic divisor} if
its valence is equal to $3$ or  it is the last divisor $D_k$
\end{defi}

The characteristic divisors in Figure (\ref{fig:first-graph}) are $D_5, D_9$ and $D_{15}$.

 Recall that we are {assuming} $\gamma$ to be singular of genus $g\geq 1$. Assume
its Puiseux expansion is
\begin{equation*}
  \label{eq:puiseux-expansion}
  \gamma(t) = \bigg(t^n, \sum_{i>n} c_i {t^{i}} \bigg)
\end{equation*}
and that its Puiseux characteristic exponents are $p_i/q_i$. Write
{$r_0=1$}, $q_1=r_1$ and $q_i=r_1\dots r_i$
(so that $n=r_1\dots r_g$).
There are as many characteristic divisors as Puiseux characteristic exponents.
Let $D_{a_1}, \hdots, D_{a_{g-1}}, D_{a_g}$ ($a_1 < a_2 < \hdots < a_g$) be the characteristic
divisors.   The weights satisfy $w(D_{a_j}) = q_j$.
 Roughly speaking, $q_j$ are the ``partial'' multiplicities
of the truncation of the Puiseux expansion of $\gamma$ to the corresponding order
(see Remark \ref{rem:mult}). 
\begin{rem}
Any divisor $D_j$ of $\pi$ satisfies $\overline{v}_{\overline{d}} (D_j) \leq  3$.
Notice that only characteristic divisors $D_j$ can satisfy
$\overline{v}_{\overline{d}} (D_j) = 3$. For instance if all irreducible components of $E$ are
invariant then the characteristic divisors $D_j$ are exactly those that satisfy
$\overline{v}_{\overline{d}} (D_j) = 3$.

Notice that all terms in the definition of $\Lambda_{\mathcal{F}, {\mathcal I}}$ are non-negative
whereas the term
$w(D_j)  (2 - \overline{v}_{\overline{d}} (D_j))$ in the sum defining
$\Lambda_{\mathcal{F}, {\mathcal N}}$ is negative if and only if
$\overline{v}_{\overline{d}} (D_j)=3$.
\end{rem}
\begin{defi}\label{def:baddivisor}
An irreducible component $D_l$ of the divisor is {\it a bad divisor} if
it is a non-invariant characteristic divisor and all adjacent irreducible components of
$E$ are invariant by ${\mathcal F}'$. A non-bad  divisor is called
{\it good}.
\end{defi}
 The divisor $D_5$  in Figure (\ref{fig:first-graph}) cannot be bad
  because it meets a dicritical component.
\begin{rem}
\label{rem:nobaddivisors}
We have  $\Lambda_{\mathcal{F}, {\mathcal N}} \geq 0$  if there are no bad divisors.
In this case,  $\Lambda_{\mathcal{F}, \mathcal{I}}\geq 0$   implies 
$\nu_0 ({\mathcal F}) \geq \nu_{0} (\gamma)-1$, which is the
lower bound for $\nu_0 ({\mathcal F})$ provided in
\cite{Camacho-Lins-Sad:topological} for the non-dicritical case.
\end{rem}

\begin{defi} \label{definition:irreducible-multiplicity} The
  {\it virtual multiplicity} $\mu(\gamma)$ of $\gamma$ is
   $q_{g-1}$. 
\end{defi}

\begin{defi}
A \emph{configuration of bad divisors} is a     sequence   
$j_1< \dots< j_p$ with $j_i\in \left\{ 1, \dots, g \right\}$.
We say that it is a configuration of bad divisors for $\gamma$ and
${\mathcal F}$ if the bad divisors of ${\mathcal F}$ are
$D_{b_1}, D_{b_2}, \hdots, D_{b_p}$ where $b_i = a_{j_i}$.
\end{defi}

\begin{figure}[h!]
  \centering
  \begin{tabular}{cc}
            \begin{tikzpicture}[scale=0.66,every circle node/.style={draw,scale=0.66,fill=white},
  fixed point arithmetic={scale results=10^-6}]
  \draw[color=black] (0,0)node[circle,radius=1.5pt,color=black] {} -- (1,0)
  node(P1)[circle] {}-- (1,-1)node(Q1)[circle]{} -- (1,-2)node[circle]{} --
  (1,-3)node[circle]{};
  \draw (P1) -- (2,0)node[circle]{} -- (3,0) node(P2)[circle] {} --
  (3, -1)node[circle]{} -- (3,-2)node[circle]{};
  \draw (P2) -- (4,0)node[circle]{} -- (5,0)node(Q2)[circle]{}--(6,0)
  node[circle]{} --
  (7,0)node(P3)[circle]{}--(7,-1)node[circle]{}--(7,-2)node[circle]{};
  \fill (P2) circle[radius=0.2,black];
  \fill (P3) circle[radius=0.2,black];
  \fill[gray] (Q1) circle[radius=0.2];
  \fill[gray] (Q2) circle[radius=0.2];
  \draw[color=black,->] (P3) -- +(0.3,0.3);
  \foreach \i in {0,10,...,350}
  \tikzmath{\x=cos(\i)*0.27;\y=sin(\i)*0.27;}
  \draw[color=black] (Q1) -- +(\x,\y);
  \foreach \i in {0,10,...,350}
  \tikzmath{\x=cos(\i)*0.27;\y=sin(\i)*0.27;}
  \draw[color=black] (Q2) -- +(\x,\y);
  \node[anchor=south] at(P1) {$D_5$};
  \node[anchor=south] at (P2) {$D_9$};
  \node[anchor=north west] at (P3) {$D_{15}$};
\end{tikzpicture}
    &
        \begin{tikzpicture}[scale=0.66,every circle node/.style={draw,fill=white,scale=0.66},
  fixed point arithmetic={scale results=10^-6}]
  \draw[color=black] (0,0)node[circle,radius=1.5pt,color=black] {} -- (1,0)
  node(P1)[circle] {}-- (1,-1)node(Q1)[circle]{} -- (1,-2)node[circle]{} --
  (1,-3)node[circle]{};
  \draw (P1) -- (2,0)node[circle]{} -- (3,0) node(P2)[circle] {} --
  (3, -1)node[circle]{} -- (3,-2)node[circle]{};
  \draw (P2) -- (4,0)node[circle]{} -- (5,0)node(Q2)[circle]{}--(6,0)
  node[circle]{} --
  (7,0)node(P3)[circle]{}--(7,-1)node[circle]{}--(7,-2)node[circle]{};
  \fill (P3) circle[radius=0.2,black];
  \fill[gray] (P1) circle[radius=0.2];
  \fill[gray] (Q1) circle[radius=0.2];
  \fill[gray] (Q2) circle[radius=0.2];
  \draw[color=black,->] (P3) -- +(0.3,0.3);
   \foreach \i in {0,10,...,350}
  \tikzmath{\x=cos(\i)*0.27;\y=sin(\i)*0.27;}
  \draw[color=black] (P1) -- +(\x,\y);
  \foreach \i in {0,10,...,350}
  \tikzmath{\x=cos(\i)*0.27;\y=sin(\i)*0.27;}
  \draw[color=black] (Q1) -- +(\x,\y);
  \foreach \i in {0,10,...,350}
  \tikzmath{\x=cos(\i)*0.27;\y=sin(\i)*0.27;}
  \draw[color=black] (Q2) -- +(\x,\y);
  \node[anchor=south] at(P1) {$D_5$};
  \node[anchor=south] at (P2) {$D_9$};
  \node[anchor=north west] at (P3) {$D_{15}$};
\end{tikzpicture}
  \end{tabular}
  \caption{Two possible configurations of bad divisors (in black)
  for $\gamma$ and $\mathcal{F}$.}
  \label{fig:bad-divisors-configs}
\end{figure}
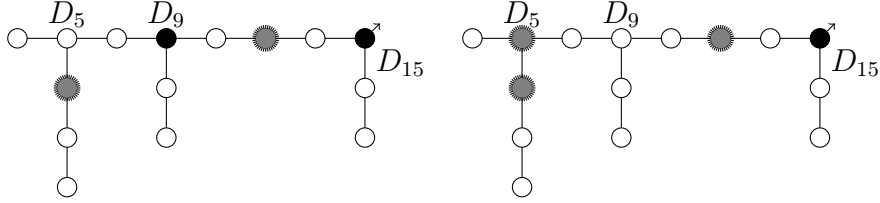
\begin{rem}
\label{rem:noconsbad}
Two characteristic divisors $D_{a_j}$ and $D_{a_{j+1}}$ such that
$D_{a_j} \cap D_{a_{j+1}} \neq \emptyset$ can not be simultaneously bad.
\end{rem}
The next proposition illustrates that once a configuration of bad divisors is fixed, the worst lower
estimate $\Lambda_{\mathcal{F}}$ is obtained when all good divisors are invariant.
\begin{pro}
\label{pro:minlam}
Let ${\mathcal F}, {\mathcal G}$ be germs of singular holomorphic foliations, defined in a
neighborhood of the origin in ${\mathbb C}^{2}$,
with a common invariant curve $\gamma$. Suppose ${\mathcal F}, {\mathcal G}$
share a configuration of bad divisors for $\gamma$.
Suppose that all non-invariant divisors of ${\mathcal F}$ are bad and that
${\mathcal G}$ has more non-invariant divisors than
${\mathcal F}$. Then
$\Lambda_{\mathcal{G}} > \Lambda_{\mathcal{F}}$.
\end{pro}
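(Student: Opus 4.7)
Set $\Delta := \mathcal{N}_{\mathcal{G}} \setminus \mathcal{N}_{\mathcal{F}}$, which is nonempty by hypothesis since $\mathcal{N}_{\mathcal{F}} = \mathcal{B} \subset \mathcal{N}_{\mathcal{G}}$. The approach is to decompose $\Lambda_{\mathcal{G}} - \Lambda_{\mathcal{F}}$ as a sum of contributions $\Phi_H$, one for each connected component $H$ of $\Inv(E)_{\mathcal{F}}$ meeting $\Delta$, and show $\Phi_H > 0$ for each such $H$. The key structural observation extracted from the shared bad divisor hypothesis is $N(\mathcal{B}) \cap \Delta = \emptyset$: each $D_b \in \mathcal{B}$ is bad in both $\mathcal{F}$ and $\mathcal{G}$, hence has all its neighbors invariant in both, so none can lie in $\Delta$. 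Consequently, for each $D \in \Delta$, all neighbors of $D$ are $\mathcal{F}$-invariant and lie in the same connected component of $\Inv(E)_{\mathcal{F}}$ as $D$. Moreover, since $D$ is \emph{not} a bad divisor of $\mathcal{G}$ (these coincide with $\mathcal{B}$, which does not contain $D$), either $D$ is non-characteristic (valence $b_D \leq 2$) or $D$ is characteristic with some $\mathcal{G}$-noninvariant neighbor, which must itself lie in $\Delta$; writing $a_D := |N(D) \cap \Delta|$, this forces $a_D \geq 1$ in the characteristic case.

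Because the neighbors of each $\mathcal{B}$-divisor are the same invariant set in both foliations, their contributions to $\Lambda_{\cdot,\mathcal{N}}$ cancel, so $\Lambda_{\mathcal{G},\mathcal{N}} - \Lambda_{\mathcal{F},\mathcal{N}} = \sum_{D \in \Delta} w(D)\,\theta_D$ with $\theta_D := 2 - \overline{v}_{\overline{d}, \mathcal{G}}(D)$. Using $N(D) \subset \Inv(E)_{\mathcal{F}}$ one computes $\theta_D = 2 - b_D + a_D$ for non-characteristic $D \neq D_k$, $\theta_D = a_D - 1$ for characteristic $D \neq D_k$, and $\theta_{D_k} = 1 - b_{D_k} + a_{D_k}$; the non-bad condition gives $\theta_D \geq 0$ in every case, with $\theta_{D_k} = 1$ (as $b_{D_k} = 1 = a_{D_k}$). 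For the invariant part, for each $H \in \mathcal{I}_{\mathcal{F}}$ meeting $\Delta$, removing $\Delta_H := H \cap \Delta$ produces $c_H$ new components $H^1, \ldots, H^{c_H}$ each of weight $\geq w(H)$; the standard tree-count gives
\begin{equation*}
c_H = 1 - |\Delta_H| + \sum_{D \in \Delta_H} b_D - e_{\Delta_H},
\end{equation*}
where $e_{\Delta_H}$ is the number of edges internal to $\Delta_H$. Setting $\Phi_H := \tau_H + \sum_{D \in \Delta_H} w(D)\,\theta_D$, where $\tau_H$ is the change in $\Lambda_{\cdot,\mathcal{I}}$ localized to $H$ (carefully tracking the $\tilde{\mathcal{I}}$-exclusion of the $D_k$-containing component), yields $\Lambda_{\mathcal{G}} - \Lambda_{\mathcal{F}} = \sum_{H : \Delta_H \neq \emptyset} \Phi_H$.

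It remains to show $\Phi_H > 0$ in every case. If $\Delta_H = H$ then $c_H = 0$ and $a_D = b_D$ for all $D \in H$, so $\theta_D = 2$ (or $\theta_{D_k} = 1$ if $D_k \in H$); then $\Phi_H \geq 2\sum_{D \in H} w(D) - w(H) \geq w(H)$ after accounting for the invariant loss. If $\Delta_H \subsetneq H$, both $\tau_H \geq 0$ and $\sum_{D \in \Delta_H} \theta_D w(D) \geq 0$, so the only threat is the degenerate case in which both vanish. The former forces $c_H = 1$, and the latter forces $(b_D, a_D) = (2, 0)$ for every non-characteristic $D \in \Delta_H$ and $a_D = 1$ for every characteristic $D \in \Delta_H$; substituting these values yields $c_H = 1 + n_N + 3n_C/2$, where $n_N, n_C$ count the non-characteristic and characteristic divisors in $\Delta_H$, which is incompatible with $c_H = 1$ as soon as $\Delta_H \neq \emptyset$. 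This combinatorial incompatibility is the main obstacle of the proof; once it is ruled out, $\Phi_H > 0$ follows in every case, and summing over the nonempty set of components meeting $\Delta$ gives $\Lambda_{\mathcal{G}} > \Lambda_{\mathcal{F}}$.
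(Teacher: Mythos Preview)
Your argument is correct and starts from the same decomposition as the paper: bad-divisor contributions to $\Lambda_{\cdot,\mathcal N}$ cancel, every $D\in\Delta$ is good for $\mathcal G$ so $\theta_D\ge 0$, and $\Inv(E)_{\mathcal G}\subset\Inv(E)_{\mathcal F}$ so each $\mathcal F$-component $H$ refines into pieces of weight $\ge w(H)$. Where you diverge is in the strictness step. The paper avoids your global bookkeeping entirely: it fixes a single $D_j\in\Delta$ and observes that either $\overline v_{\overline d,\mathcal G}(D_j)<2$, in which case its $\mathcal N$-term is already strictly positive, or $\overline v_{\overline d,\mathcal G}(D_j)=2$, in which case removing $D_j$ either promotes the $D_k$-component to produce a new $\tilde{\mathcal I}_{\mathcal G}$-term (when $D_j=D_k$) or splits $H$ into two pieces $H_1,H_2\subset H$ with $\min(w(H_1),w(H_2))\ge w(H)$, forcing $\Lambda_{\mathcal G,\mathcal I}>\Lambda_{\mathcal F,\mathcal I}$. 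Your tree-count identity $c_H=1+n_N+3n_C/2$ reaches the same contradiction by exhaustive accounting; it is more systematic and would adapt better to more complicated dual graphs, but here the paper's one-divisor inspection is shorter.

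One small correction: you assert $b_{D_k}=1$, but since $\gamma$ is singular the last center $P_{k-1}$ is always a corner, so $D_k$ has valence $2$. This does not harm your proof: $D_k$ is characteristic, your non-bad dichotomy still gives $a_{D_k}\ge 1$, hence $\theta_{D_k}=1-b_{D_k}+a_{D_k}=a_{D_k}-1\ge 0$; and in the $\Delta_H=H$ case $a_{D_k}=b_{D_k}=2$ so $\theta_{D_k}=1$ as you claim. It would also help to state explicitly that if $D_k\in\Delta_H$ with $\Delta_H\subsetneq H$ then every surviving piece $H^i$ lies in $\tilde{\mathcal I}_{\mathcal G}$, so $\tau_H>0$ automatically and the degenerate $c_H=1$ analysis only needs to be run with $D_k\notin\Delta_H$.
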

\begin{proof}
  Notice that all divisors adjacent to bad divisors are invariant
  for both ${\mathcal F}$ and ${\mathcal G}$. Thus the
  contribution of the term
  $w(D_j) (2 - \overline{v}_{\overline{d}} (D_j))$ of a bad divisor $D_j$ to
  $\Lambda$ is the same for ${\mathcal F}$ and ${\mathcal G}$.  Moreover, we
  also deduce
\begin{equation*}
\sum_{\substack{J \in \tilde{\mathcal I}_{\mathcal G}\\J \subset H}} w(J) \geq 0 \ \ (\mathrm{resp.} \
\sum_{\substack{J \in \tilde{\mathcal I}_{\mathcal G}\\J \subset H}} w(J) \geq w(H))
\end{equation*}
for any $H \in {\mathcal I}_{\mathcal F}$ (resp. $H \in \tilde{\mathcal I}_{\mathcal F}$). We deduce
$\Lambda_{{\mathcal G}, {\mathcal I}} \geq \Lambda_{{\mathcal F}, {\mathcal I}}$.

Since any good divisor $D_j$ satisfies
$\overline{v}_{\overline{d}} (D_j) \leq 2$, we conclude that the inequality
$\Lambda_{{\mathcal G}, {\mathcal N}} - \Lambda_{{\mathcal F}, {\mathcal N}}
\geq 0$ holds.

Let $D_j$ be a divisor that is non-${\mathcal G}$-invariant but is invariant for ${\mathcal F}$.
We can assume $\overline{v}_{\overline{d}} (D_j) = 2$
(where the valence is with respect to $\mathcal{G}$) since otherwise
it is clear that $\Lambda_{{\mathcal G}, {\mathcal I}} > \Lambda_{{\mathcal F}, {\mathcal I}}$
and then $\Lambda_{\mathcal{G}} > \Lambda_{\mathcal{F}}$.
There are two possibilities, namely $D_j=D_k$ and ${v}_{\overline{d}} (D_j) = 1$ or $j<k$ and
${v}_{\overline{d}} (D_j) = 2$.
In the former case $D_j$ is contained in the unique element $H$ of
${\mathcal I}_{\mathcal F} \setminus \tilde{\mathcal I}_{\mathcal F} $ and
there is a unique $J \in \tilde{I}_{\mathcal G}$ that has a divisor adjacent to
$D_j$. Since it satisfies $J \subset H$ and $w(J) >0$, it follows that
$\Lambda_{{\mathcal G}, {\mathcal I}} > \Lambda_{{\mathcal F}, {\mathcal I}}$ and
then  $\Lambda_{\mathcal{G}} > \Lambda_{\mathcal{F}}$, as desired.
In the latter case let $H$ be the element of ${\mathcal I}_{\mathcal F}$ containing $D_j$.
We denote by $H_1$ and $H_2$ the elements of ${\mathcal I}_{\mathcal G}$ containing divisors
adjacent to $D_j$. They satisfy $H_1 \cup H_2 \subset H$.
Either $H_1$ or $H_2$ belongs to
$\tilde{\mathcal I}_{\mathcal G}$ and thus $\sum_{H_j \in \tilde{\mathcal I}_{\mathcal G}} w(H_j) >0$.
Moreover since $w(H_1) \geq w(H) \leq w(H_2)$ we deduce
$\Lambda_{{\mathcal G}, {\mathcal I}} - \Lambda_{{\mathcal F}, {\mathcal I}} > 0$ and then
$\Lambda_{{\mathcal G}} > \Lambda_{{\mathcal F}}$.
\end{proof}

We can now compute lower bounds for $\Lambda_{\mathcal{F}}$ in terms of its configuration
of bad divisors.

\begin{defi}
  Fix a configuration of bad divisors ${\mathcal B}= \{ j_1, \hdots,
  j_p\}$. Given $1 < l \leq p$, we call $F_{l}$ the connected component of
  $E \setminus \cup_{q=1}^{p} D_{{b_q}}$ that intersects both $D_{{b_{l-1}}}$
  and $D_{{b_l}}$. For $l=1$, $F_{1}$ is the connected component of
  $E \setminus \cup_{q=1}^{p} D_{{b_q}}$ that contains the first divisor
  {$\pi_{1}^{-1}(0,0)$.} Given $1 \leq l \leq p$ we denote by $C_l$ the other
  connected component of $E \setminus \cup_{r=1}^{p} D_{{b_r}}$ that
  intersects $D_{{b_l}}$. The components $F_l$ are called the \emph{free
    components} of $E$ and the $C_l$ are the \emph{clamped} components.
\end{defi}

Figure \ref{fig:connected-components-first-graph} provides an example of the previous definition.
\begin{rem}
  By construction, the clamped components $C_1, \hdots, C_p$ are non-empty.
  Moreover, the free components $F_1, \hdots F_p$ are also non-empty.  Indeed if
  $F_l = \emptyset$ then $D_{b_{l-1}}$ and $D_{b_l}$ intersect in $E$ and
  this contradicts that both divisors are bad by Remark \ref{rem:noconsbad}.
\end{rem}
\begin{figure}[h!]
  \centering
  \begin{tikzpicture}[every circle node/.style={draw,fill=white},
  fixed point arithmetic={scale results=10^-6}]
  \draw[color=black] (0,0)node[circle,radius=3pt,color=black] {} --
  (1,0) node(F1)[circle]{} -- (2,0);
  \draw[color=black] (1,0)node[circle]{} -- (1,-1) node[circle]{};
  \draw[color=black] (1,-1)node(C11)[circle]{} -- (1,-2)node(C12)[circle]{} --
  (1,-3)node(C13)[circle]{};
  \draw (2,0)node(F21)[circle]{};
  \draw (3, -1)node(C21)[circle]{} -- (3,-2)node(C22)[circle]{};
  \draw (4,0)node(F31)[circle]{} -- (5,0)node(F32)[circle]{}--(6,0)
  node(F33)[circle]{};
  \draw (7,-1)node(C31)[circle]{}--(7,-2)node(C32)[circle]{};
  \fill[gray] (F1) circle[radius=0.1666];
  \fill[gray] (C11) circle[radius=0.1666];
   \fill[gray] (C22) circle[radius=0.1666];
  \fill[gray] (F32) circle[radius=0.1666];
   \foreach \i in {0,10,...,350}
  \tikzmath{\x=cos(\i)*0.27;\y=sin(\i)*0.27;}
  \draw[color=black] (F1) -- +(\x,\y);
  \foreach \i in {0,10,...,350}
  \tikzmath{\x=cos(\i)*0.27;\y=sin(\i)*0.27;}
  \draw[color=black] (C11) -- +(\x,\y);
   \foreach \i in {0,10,...,350}
  \tikzmath{\x=cos(\i)*0.27;\y=sin(\i)*0.27;}
  \draw[color=black] (C22) -- +(\x,\y);
  \foreach \i in {0,10,...,350}
  \tikzmath{\x=cos(\i)*0.27;\y=sin(\i)*0.27;}
  \draw[color=black] (F32) -- +(\x,\y);
  \draw[rounded corners, dashed](-0.33,0) -- (-0.33,0.33) -- (2.33,0.33) --
  (2.33,-0.33) -- (1.33,-0.33) -- (1.33,-3.33) -- (0.66, -3.33) -- (0.66, -0.33)
  -- (-0.33,-0.33) -- (-0.33,0);
  \path (F1)--+(0,0.6) node{$F_1$};
  \path (C22)--+(0.65,0.5) node{$C_1$};
  \path (F32)--+(0,0.6) node{$F_2$};
  \path (C32)--+(0.65,0.5) node{$C_2$};
  \node[draw,dashed,fit=(C21) (C22), rounded corners]{};
  \node[draw,dashed,fit=(F31)(F32)(F33), rounded corners]{};
  \node[draw,dashed,fit=(C31)(C32), rounded corners]{};
\end{tikzpicture}
\caption{Connected components $F_l$ and $C_l$ for the graph in Figure \ref{fig:first-graph}, for the configuration of bad divisors $(9, 15)$.}
\label{fig:connected-components-first-graph}
\end{figure}
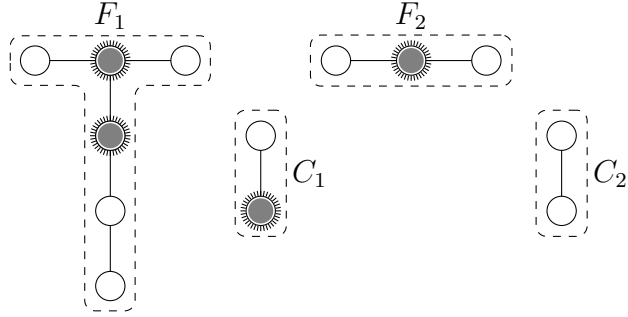

\begin{rem}
The weights of $C_l$ are exactly  $w(C_l)= q_{j_{l}-1}$  
where as above,  $q_{j_{l}-1}$  is the {$j_l-1$-th}
``partial multiplicity'' of $\gamma$. On the other hand,
 $w(F_{l}) \geq  q_{j_{l-1}}$  and the equality happens
if and only if $D_{a_{j_{l-1}+1}}$ is good or
$P_{b_{l-1}+1}$ is a non-corner point.
\end{rem}

 \begin{defi}\label{def:bad-foliation-for-gamma}
   A foliation $\mathcal{G}$ whose only non-invariant divisors are all bad divisors is called a
   \emph{bad foliation for} $\gamma$.  Given its configuration of bad divisors
   ${\mathcal B}$ we say that $\mathcal{G}$ is a bad foliation for $\gamma$ and
   $\mathcal{B}$.
 \end{defi}
\begin{pro}
\label{pro:calam}
Consider an irreducible curve  $\gamma$ invariant by a germ of foliation ${\mathcal F}$.
Let ${\mathcal B}= \{ j_1,\dots, j_p\}$ be a configuration of bad divisors for ${\mathcal F}$
and $\gamma$ and
set $j_0=0$. Let $\mathcal{G}$ be a bad foliation for $\gamma$ and $\mathcal{B}$.
Then: 
\begin{equation}
\label{equ:calam}
  \Lambda_{{\mathcal G}}= q_{g}  -1 + \sum_{l=1}^{p} \left(
  - q_{j_l} + q_{j_{l}-1}+ w(F_l ) \right) \geq
q_g - q_{j_p} +  \sum_{l=1}^{p} q_{j_{l}-1} .
 \end{equation}
In particular, for $\mathcal{F}$, we get
\begin{equation}
\label{equ:calam2}
\nu_{0} ({\mathcal F}) \geq \Lambda_{{\mathcal F}} \geq
q_g - q_{j_p} +  \sum_{l=1}^{p} q_{j_{l}-1}.
\end{equation} 
\end{pro}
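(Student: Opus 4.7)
The plan is to establish the equality in (\ref{equ:calam}) by a direct computation of $\Lambda_{\mathcal{G}}$ from its definition, which is tractable because a bad foliation has a completely prescribed set of non-invariant divisors, and then to transfer the bound to $\mathcal{F}$ via Propositions \ref{pro:estimate} and \ref{pro:minlam}.

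For the equality, I unpack $\Lambda_{\mathcal{G}} = (\nu_0(\gamma)-1) + \Lambda_{\mathcal{G},\mathcal{I}} + \Lambda_{\mathcal{G},\mathcal{N}}$ using $\nu_0(\gamma) = q_g$ from Remark \ref{rem:mult}. Since the only non-invariant divisors of $\mathcal{G}$ are the bad divisors $D_{b_1},\ldots,D_{b_p}$, each being a characteristic divisor of weight $q_{j_l}$ whose neighboring components of $E$ are all invariant, $\overline{v}_{\overline{d}}(D_{b_l}) = 3$, and so $\Lambda_{\mathcal{G},\mathcal{N}} = -\sum_{l=1}^{p} q_{j_l}$. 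The connected components of $\mathrm{Inv}(E)$ are precisely the free components $F_1,\ldots,F_p$ together with the clamped components $C_1,\ldots,C_p$, plus, when $j_p<g$, one additional component containing $D_k$ that lies in $\mathcal{I}\setminus\tilde{\mathcal{I}}$ and therefore does not contribute to $\Lambda_{\mathcal{G},\mathcal{I}}$; using $w(C_l)=q_{j_l-1}$, one finds $\Lambda_{\mathcal{G},\mathcal{I}} = \sum_{l=1}^{p}\bigl(w(F_l)+q_{j_l-1}\bigr)$, and assembly gives the claimed identity. The lower bound in (\ref{equ:calam}) then follows by substituting $w(F_l)\geq q_{j_{l-1}}$ and telescoping $\sum_{l=1}^{p}(q_{j_{l-1}}-q_{j_l}) = 1-q_{j_p}$ (with the convention $j_0=0$, $q_0=1$).

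To deduce (\ref{equ:calam2}), I combine Proposition \ref{pro:estimate}, which yields $\nu_0(\mathcal{F})\geq\Lambda_{\mathcal{F}}$, with Proposition \ref{pro:minlam} applied to $\mathcal{F}$ and a bad foliation $\mathcal{G}$ sharing the configuration $\mathcal{B}$: the latter gives $\Lambda_{\mathcal{F}}\geq\Lambda_{\mathcal{G}}$, with strict inequality if $\mathcal{F}$ has non-invariant divisors beyond the bad ones and equality otherwise, when $\mathcal{F}$ is itself a bad foliation. The main delicate point in the computation of $\Lambda_{\mathcal{G}}$ is the borderline case $j_p=g$ where $D_k$ itself is bad: here one needs $\overline{v}_{\overline{d}}(D_k)=v_{\overline{d}}(D_k)+1=3$, which forces $v(D_k)=2$ in the resolution graph and is consistent with the non-emptiness of the off-shoot $C_p$. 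One must also keep careful track of whether the component of $\mathrm{Inv}(E)$ containing $D_k$ belongs to $\tilde{\mathcal{I}}$, since this determines whether it contributes to $\Lambda_{\mathcal{G},\mathcal{I}}$.
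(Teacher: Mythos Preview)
Your proposal is correct and follows essentially the same approach as the paper's proof: both reduce \eqref{equ:calam2} to \eqref{equ:calam} via Propositions \ref{pro:estimate} and \ref{pro:minlam}, then compute $\Lambda_{\mathcal G}$ directly from Definition \ref{def:estimate} using $w(D_{b_l})=q_{j_l}$, $w(C_l)=q_{j_l-1}$, $w(F_l)\ge q_{j_{l-1}}$, and telescope with $q_{j_0}=q_0=1$. Your treatment is in fact slightly more explicit than the paper's in distinguishing the cases $j_p<g$ and $j_p=g$ when listing the connected components of $\mathrm{Inv}(E)$ and checking $\overline v_{\overline d}(D_{b_l})=3$, but this is just added detail, not a different argument.
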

\begin{proof}
  Since $\nu_{0} ({\mathcal F})  \geq \Lambda_{{\mathcal F}} \geq \Lambda_{{\mathcal G}}$
by Propositions \ref{pro:estimate} and \ref{pro:minlam},
it sufffices to prove (\ref{equ:calam}). It is clear that 
$\Lambda_{{\mathcal G}} = w(D_k) -1= q_g -1$ 
if ${\mathcal B} = \emptyset$ by definition,
so we assume ${\mathcal B} \neq \emptyset$ from now on.
By Definition \ref{def:estimate} we have
\begin{equation*} 
  \Lambda_{{\mathcal G}}= q_g -1 -   \sum_{l=1}^{p}  {w(D_{b_l})}
  + \sum_{l=1}^{p} w(F_l) + \sum_{l=1}^{p} w({C_l}) 
\end{equation*}
From the equalities  $w(D_{b_l}) = q_{j_l}$   and
 $w(C_l) = q_{j_l -1}$  we obtain the equality in \eqref{equ:calam}. Using now
 $w(F_l) \geq q_{j_{l-1}}$,  we deduce
\begin{equation*} 
  \begin{split}
\Lambda_{{\mathcal G}} \geq  q_g -1 +   \sum_{l=1}^{p} \left( - q_{j_l}
+  q_{j_{l-1}} \right) + \sum_{l=1}^{p} q_{j_l -1}     \\
=q_g -1 - q_{j_p} +  1  + \sum_{l=1}^{p} q_{j_l -1}.
\end{split} 
\end{equation*}
\end{proof}
From Propositions \ref{pro:estimate} and \ref{pro:calam} we obtain
\begin{cor}
\label{cor:formula2}
The following sequence of inequalities holds: 
\begin{equation}
\label{equ:formula2}
\begin{split}
  \nu_0 ({\mathcal F}) \geq \Lambda_{{\mathcal F}} +
  \sum_{\substack{D\not \subset \Inv(E)\\P \in D}} w(D)
  \mathrm{tang}_P ({\mathcal F}_k, D) \geq \\
  q_g - q_{j_p} +  \sum_{l=1}^{p} q_{j_{l}-1} + \sum_{D \in \NI(E)} w(D).
\end{split}
\end{equation} 
\end{cor}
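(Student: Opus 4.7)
The corollary is essentially a bookkeeping consequence of Propositions \ref{pro:estimate} and \ref{pro:calam}, so the plan is to combine them in the right way, paying attention to where each piece lives.

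First, I would invoke Proposition \ref{pro:estimate} directly to obtain the leftmost inequality
$$\nu_0({\mathcal F}) \geq \Lambda_{{\mathcal F}} + \sum_{\substack{D \not\subset \Inv(E)\\ P \in D}} w(D)\,\mathrm{tang}_P({\mathcal F}_k, D).$$
No additional work is required here.

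For the second inequality, the plan is to bound each summand on the right separately. From Proposition \ref{pro:calam} we have the purely combinatorial estimate
$$\Lambda_{{\mathcal F}} \geq q_g - q_{j_p} + \sum_{l=1}^{p} q_{j_l - 1}.$$
For the remaining tangency sum, I would observe that by definition of $\NI(E)$, every $D \in \NI(E)$ admits at least one point $Q \in D$ with $\mathrm{tang}_Q({\mathcal F}_k, D) \geq 1$. Since all terms $w(D)\,\mathrm{tang}_P({\mathcal F}_k, D)$ are non-negative, restricting the double sum to one such point per $D \in \NI(E)$ and dropping all other contributions gives
$$\sum_{\substack{D\not\subset\Inv(E)\\ P\in D}} w(D)\,\mathrm{tang}_P({\mathcal F}_k, D) \;\geq\; \sum_{D \in \NI(E)} w(D),$$
which is precisely the middle inequality already packaged inside Proposition \ref{pro:estimate}.

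Finally, adding the two bounds yields the desired inequality
$$\Lambda_{{\mathcal F}} + \sum_{\substack{D\not\subset\Inv(E)\\ P\in D}} w(D)\,\mathrm{tang}_P({\mathcal F}_k, D) \;\geq\; q_g - q_{j_p} + \sum_{l=1}^{p} q_{j_l - 1} + \sum_{D \in \NI(E)} w(D),$$
completing the chain. There is no real obstacle here; the only point to check carefully is that the tangency sum and the $\Lambda_{{\mathcal F}}$ lower bound are independent in the sense that Proposition \ref{pro:calam} does not already absorb the tangency contribution — but inspection of Definition \ref{def:estimate} shows that $\Lambda_{{\mathcal F}, {\mathcal N}}$ only involves the weights and non-dicritical valences of non-invariant divisors, never the tangency orders, so the two estimates can indeed be added without double counting.
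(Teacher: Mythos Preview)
Your proof is correct and follows the same route as the paper, which simply states that the corollary is obtained from Propositions \ref{pro:estimate} and \ref{pro:calam}. Your decomposition---invoking Proposition \ref{pro:estimate} for the first inequality and for the bound on the tangency sum, and Proposition \ref{pro:calam} for the bound on $\Lambda_{\mathcal F}$, then adding---is exactly the intended argument, and your remark that $\Lambda_{\mathcal F,\mathcal N}$ involves only weights and valences (so no double counting occurs) is a useful sanity check.
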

\begin{proof}[Proof of the Main Theorem]
  The result is clear if the configuration of bad divisors of $\gamma$ is empty since 
$\nu_{0} ({\mathcal F}) \geq q_g -1$  by Remark \ref{rem:nobaddivisors}.
We assume, from now on, that it is not empty.

  If the last divisor $D_k$ is not bad, then Proposition \ref{pro:calam} implies
\begin{equation*} 
  \nu_0 ({\mathcal F}) \geq q_g - q_{j_p}  + q_{j_p -1} . 
\end{equation*}
From which {we obtain}
\begin{equation*} 
\frac{\nu_0 ({\mathcal F})}{q_g} > 1 - \frac{1}{2} = \frac{1}{2} . 
\end{equation*}
And, since  $q_g/q_{g-1}= r_g\geq 2$, 
we get   $\nu_0 ({\mathcal F}) >  q_{g-1}$. 

Finally, if $D_k$ is a bad divisor, then 
$\nu_0 ({\mathcal F}) \geq q_g - q_{g} + q_{g -1} = q_{g -1}$ 
by Proposition \ref{pro:calam}.
\end{proof}
\begin{rem}
As an application of our theorem, we obtain useful lower bonds for the multiplicity of foliations
preserving simple irreducible curves. For example,
let $\gamma$ be the curve with Puiseux parametrization
$t \mapsto (t^{2n}, t^{2n+2} + t^{2n+3})$  ($n \geq 2$). It has two Puiseux characteristic
exponents and $g=2$, $r_1 = n$, $r_2=2$.
Every vector field $X \in {\mathfrak X} ({\mathbb C}^{2},0)$ that preserves $\gamma$
satisfies  $\nu_0 (X) \geq \frac{\nu_0 (\gamma)}{2}=n$ by  the Main Theorem. 
The cusp of parametrization
$t \mapsto (t^{n}, t^{n+1})$ is an approximation of $\gamma$ obtained by removing
the higher order term and whose multiplicity is a lower bound for $\nu_0 (X)$.
\end{rem}
\begin{cor}
If an analytic branch  $\gamma$ has genus $g\geq 2$ and is invariant for $\mathcal{F}$,
then $\nu_0 ({\mathcal F}) \geq 2$.
\end{cor}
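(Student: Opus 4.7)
The plan is to derive this corollary as an immediate consequence of the Main Theorem, which has already been established at this point of the paper. The Main Theorem asserts that $\nu_0(\mathcal{F}) \geq \mu(\gamma) = q_{g-1} \geq 2^{g-1}$ for any singular invariant branch $\gamma$, so once we verify the lower bound $2^{g-1} \geq 2$ under the hypothesis $g \geq 2$, the corollary follows at once.

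More concretely, I would first note that the hypothesis $g \geq 2$ in particular implies that $\gamma$ is singular (it has at least one Puiseux characteristic exponent), so that the Main Theorem applies. Then I would recall that the partial multiplicities are products of the truncated denominators, namely $q_i = r_1 r_2 \cdots r_i$, and that each $r_j$ is at least $2$ because it is the denominator of a genuine Puiseux characteristic exponent in lowest terms. Consequently $q_{g-1} = r_1 \cdots r_{g-1} \geq 2^{g-1} \geq 2$ whenever $g \geq 2$.

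Combining, we get
\begin{equation*}
\nu_0(\mathcal{F}) \geq \mu(\gamma) = q_{g-1} \geq 2^{g-1} \geq 2,
\end{equation*}
which is the desired statement. There is really no main obstacle here: all the work is already contained in the Main Theorem and in the elementary estimate $r_j \geq 2$ on Puiseux denominators, which is part of the standard setup recalled just before Definition \ref{definition:irreducible-multiplicity}.
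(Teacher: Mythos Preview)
Your proposal is correct and matches the paper's approach: the paper states this corollary without proof, clearly intending it as an immediate consequence of the Main Theorem via the chain $\nu_0(\mathcal{F}) \geq q_{g-1} \geq 2^{g-1} \geq 2$ for $g \geq 2$. Your explicit verification that $r_j \geq 2$ (since the partial multiplicities $q_0 < q_1 < \cdots < q_g$ are strictly increasing integers) is the natural justification of the inequality $q_{g-1} \geq 2^{g-1}$ already asserted in the Main Theorem.
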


\begin{teo} \label{teo:bound-H-all-denoms}
  Let $\gamma$ and $\mathcal{F}$ be as above with $g\geq 1$. Let $P_{k-1}$ be the last center of $\pi$. The following alternative holds:
  \begin{itemize}
  \item Either  $\nu_0(\gamma)= q_g \leq 2 q_{g-1} (r_{g} -1) \leq 2\nu_0(\mathcal{F})$,
   or
  \item $P_{k-1}$ is a radial singularity of $\mathcal{F}_{k-1}$ and $D_k$ is a bad divisor.
  \end{itemize}
Moreover, if we are not in the latter case and
the equality $\nu_0(\gamma) = 2\nu_0(\mathcal{F})$
holds, then $\nu_0(\gamma)=2$ and the whole exceptional divisor $E$ is ${\mathcal F}'$-invariant.
\end{teo}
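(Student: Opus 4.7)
The plan is to apply Corollary \ref{cor:formula2} and split into cases according to the configuration of bad divisors $\mathcal{B}=\{j_1,\ldots,j_p\}$ of $\mathcal{F}$ for $\gamma$. Since $r_g\geq 2$, the inequality $q_g=q_{g-1}r_g\leq 2q_{g-1}(r_g-1)$ is automatic, so the content of the first alternative reduces to $\nu_0(\mathcal{F})\geq q_{g-1}(r_g-1)$. If $\mathcal{B}=\emptyset$, Remark \ref{rem:nobaddivisors} yields $\nu_0(\mathcal{F})\geq q_g-1\geq q_{g-1}(r_g-1)$. If $\mathcal{B}\neq\emptyset$ with $j_p<g$, then $q_{j_p}\leq q_{g-1}$, and Corollary \ref{cor:formula2} gives $\nu_0(\mathcal{F})\geq q_g-q_{j_p}\geq q_{g-1}(r_g-1)$. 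If $j_p=g$ and $D_k\in\NI(E)$, the $\NI$-term contributes $w(D_k)=q_g$, hence $\nu_0(\mathcal{F})\geq q_{g-1}+q_g>q_{g-1}(r_g-1)$.

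The remaining case is $j_p=g$ with $D_k\notin\NI(E)$. The condition $D_k\notin\NI(E)$ is equivalent to $\mathcal{F}'$ being regular and transverse to $D_k$ at every point of $D_k$, so $D_k$ is a dicritical divisor of the blow-up $\pi_k$ with no singularities of $\mathcal{F}'$ on it; this is precisely the characterization of $P_{k-1}$ being a radial singularity of $\mathcal{F}_{k-1}$. Moreover, $D_k$ is bad because it is the (necessarily characteristic) last divisor, it is non-invariant, and its only adjacent divisor $D_{k-1}$ must be invariant (otherwise $\mathrm{tang}_{P_k}(\mathcal{F}',D_{k-1})$ would be positive, but $\mathcal{F}'$ is transverse to $D_k$ everywhere and the dicritical cross the dicritical picture forbids this). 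This recovers the second alternative.

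For the equality statement, assume we are not in the radial case and $\nu_0(\gamma)=2\nu_0(\mathcal{F})$. Then $q_g=2q_{g-1}(r_g-1)$ forces $r_g=2$ and $\nu_0(\mathcal{F})=q_{g-1}$. Going back through the subcases: in ``$j_p<g$'', Corollary \ref{cor:formula2} together with $q_{j_p}\leq q_{g-1}$ and $q_{j_p-1}\geq q_0=1$ gives $\nu_0(\mathcal{F})\geq q_g-q_{j_p}+q_{j_p-1}\geq q_{g-1}+1$; in ``$j_p=g$, $D_k\in\NI(E)$'' it gives $\nu_0(\mathcal{F})\geq q_{g-1}+q_g$. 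Both contradict $\nu_0(\mathcal{F})=q_{g-1}$, so $\mathcal{B}=\emptyset$; then $\nu_0(\mathcal{F})=q_{g-1}\geq q_g-1=2q_{g-1}-1$ forces $q_{g-1}=1$, hence $g=1$ and $\nu_0(\gamma)=q_g=2$. To obtain $\Inv(E)=E$, note that the chain in Proposition \ref{pro:estimate} becomes a sequence of equalities in this extremal situation, so $\NI(E)=\emptyset$ and $\Lambda_{\mathcal{F},\mathcal{I}}=\Lambda_{\mathcal{F},\mathcal{N}}=0$. The first yields $\tilde{\mathcal{I}}=\emptyset$ (every $w(H)\geq 1$), so every component of $\Inv(E)$ contains $D_k$; the second, combined with the non-negativity of its summands (no bad divisors), forces $\overline{v}_{\overline{d}}(D_j)=2$ for every non-invariant $D_j$. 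If $D_k$ were non-invariant, then $\tilde{\mathcal{I}}=\mathcal{I}$, so $\Inv(E)=\emptyset$ and $v_{\overline{d}}(D_k)=0$, contradicting $\overline{v}_{\overline{d}}(D_k)=v_{\overline{d}}(D_k)+1=2$; hence $D_k$ is invariant. For $j<k$, a non-invariant $D_j$ would satisfy $v_{\overline{d}}(D_j)=2$, giving two invariant neighbors; since the dual graph of $E$ is a tree, removing $D_j$ disconnects them, placing them in distinct connected components of $\Inv(E)$, at most one of which contains $D_k$, contradicting $\tilde{\mathcal{I}}=\emptyset$.

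The main obstacle is the last step: carefully propagating the equality conditions through Proposition \ref{pro:estimate} and Corollary \ref{cor:formula2} into combinatorial constraints on $\Inv(E)$, and then using the tree structure of the dual graph to rule out every non-invariant divisor.
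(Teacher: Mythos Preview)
Your proof is correct and follows the same case analysis as the paper (empty bad configuration; $j_p<g$; $D_k$ bad with or without tangency along $D_k$), invoking Corollary~\ref{cor:formula2} in the same way; for the equality clause you unpack the constraints $\Lambda_{\mathcal{F},\mathcal{I}}=\Lambda_{\mathcal{F},\mathcal{N}}=0$ by hand and use the tree structure of $E$, whereas the paper simply invokes Proposition~\ref{pro:minlam}, but the underlying argument is the same. One cosmetic blemish: in the ``remaining case'' you re-argue that $D_k$ is bad, but this is already the hypothesis $j_p=g$, and your claim that $D_{k-1}$ is the \emph{only} divisor adjacent to $D_k$ is false (for $g\geq 1$ the last center $P_{k-1}$ is always a corner, so $D_k$ has valence~$2$ in $E$); since that sentence is redundant, the proof stands without it.
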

\begin{proof}
  We consider the same cases as in the proof of  the Main Theorem. 

Assume, first, that $\gamma$ contains no bad divisors for $\mathcal{F}$. {We deduce}
\begin{equation*} 
\nu_{0} ({\mathcal F}) \geq \Lambda_{{\mathcal F}} \geq  q_g -1 \geq
q_{g-1} (r_g -1) \geq \frac{q_g}{2} = \frac{\nu_0(\gamma)}{2} 
\end{equation*}
by Proposition \ref{pro:calam}. The third inequality is strict
unless $g=1$ and the fourth inequality is strict unless $r_g=2$.
Therefore, if  $\nu_{0} ({\mathcal F}) = 2^{-1} q_g$ 
holds then  $\Lambda_{{\mathcal F}} =  q_1  -1$ 
and this implies that all the divisors $D_1,\dots, D_k$ are ${\mathcal F}_k$-invariant
by Proposition \ref{pro:minlam}.

Assume now that the configuration of bad divisors of $\gamma$ is non-empty but that $D_k$ is good.
This implies $g >1$. Then
\begin{equation*} 
\nu_0 ({\mathcal F}) > q_g - q_{g-1} = q_{g-1}(r_g -1) \geq \frac{\nu_0(\gamma)}{2} 
\end{equation*}
by Formula \eqref{equ:calam2}.

Finally, assume that $D_k$ is a bad divisor but that {$P_{k-1}$}
is not a radial singularity of ${\mathcal F}_{k-1}$. Then
$\sum_{P \in D_k} \mathrm{tang}_P ({\mathcal F}_k, D_k) \geq 1$ and
by Corollary \ref{cor:formula2}  we get
\begin{equation*} 
    \nu_0 ({\mathcal F}) \geq  \Lambda_{{\mathcal F}} + w(D_k)
  \geq q_g - q_{g} + q_{g -1} + w(D_k) \geq q_g  +  q_{g -1} > \nu_0(\gamma)  
\end{equation*}
and the proof is complete.
\end{proof}
\begin{rem}
As the previous result shows, the worst possible situation for our lower bounds of $\nu_0 ({\mathcal F})$ in terms of {$\nu_0 (\gamma)$} happens when
\begin{equation}
\label{equ:badcase}
\nu_{0} ({\mathcal F}) < \frac{{\nu_0 (\gamma)}}{2} \ \ \mathrm{and} \ \
\nu_{0} ({\mathcal F}) =   \mu   (\gamma)  = q_{g-1} . 
\end{equation}
In this case, $D_{k}$ is necessarily a bad divisor and ${\mathcal F}_{k-1}$ has a radial singularity at
$P_{k-1}$. Moreover, $D_k$ is the unique bad divisor for ${\mathcal F}$ since otherwise we have
 $\nu_0 ({\mathcal F}) \geq \Lambda_{{\mathcal F}}  > q_{g-1}$  
by  Proposition \ref{pro:calam}.
A bad foliation ${\mathcal G}$ whose unique bad divisor is $D_k$ satisfies 
$\Lambda_{{\mathcal G}} = q_{g-1}$.  Therefore
the inequality $ \Lambda_{{\mathcal F}} \geq \Lambda_{{\mathcal G}}$ is strict if
there exists a divisor, other than $D_k$, that is non-invariant by Proposition \ref{pro:minlam}:
in this case we obtain again  $\Lambda_{{\mathcal F}} > q_{g-1}$. 

In summary, {property} (\ref{equ:badcase}) happens only if all the exceptional divisors of the
desingularization of $\gamma$ except the last one $D_k$ are invariant, $D_k$ is non-invariant and
$P_{k-1}$ is a radial singularity of ${\mathcal F}_{k-1}$.
\end{rem}
\begin{rem} 
\label{rem:sharp}
Let us explain how to construct examples of pairs $({\mathcal F}, \gamma)$ of
foliations and invariant branches such that $\nu_{0} ({\mathcal F}) = \mu (\gamma)$.
Let $\gamma'$ be a germ of irreducible analytic curve such that $g \geq 2$ and fix
its desingularization $\pi$.  Consider an irreducible analytic curve $\xi$ such
that its strict transform by $\pi$ intersects transversally in a non-corner point the
unique irreducible component $D_j$ of the last clamped component $C_p$ whose weight
is equal to $q_{g-1}$.  This property implies $\nu_0 (\xi) = q_{g-1}$.  Let $\ell$
a line whose tangent cone is different than the tangent cone of $\gamma'$.  We denote
by $f=0$ and $L=0$ irreducible equations of $\xi$ and $\ell$ respectively.  Let
$d$ (resp. $d'$) be the order of $f \circ \pi$ (resp. $L \circ \pi$) along $D_k$.  We
denote $a = \mathrm{lcm}(d,d') / d$ and $c= \mathrm{lcm}(d,d') / d'$ and consider the
foliation ${\mathcal F}$ given by its meromorphic first integral $F= f^{a}/L^{c}$.
The function $F \circ \tilde{\pi}_{j-1}$ is of the form $\tilde{f}^{a}/M^{c'}$ in the
neighborhood of $P_{j-1}$ where $\tilde{f}=0$ is the strict transform of $f=0$ and
$M=0$ is a local equation of the divisor $D_{j-1}$. The curves $\tilde{f}=0$ and
$D_{j-1}$ are smooth and transversal at $P_{j-1}$ and hence $\tilde{f}^{a}/M^{c'}$ is
equal to $y_1^{a}/x_1^{c'}$ in some coordinates centered at $P_{j-1}$.  Moreover, we
have $c'>0$ since otherwise the order of $F \circ \pi$ along $D_k$ is greater than
$0$ contradicting the choice of $F$. Since all the points $P_{j}, \hdots, P_{k-1}$
are corners, there is just one non-invariant divisor $D$ among
$D_{j-1}, \hdots, D_k$, it is characterized by $F \circ \pi$ having order $0$ along
$D$ and thus $D=D_k$. Moreover since $\tilde{f}^{a}/M^{c'} = y_1^{a}/x_1^{c'}$, the
singular point $P_{k-1}$ is radial and $D_k$ is a bad divisor. Let $\gamma$ be one of
the irreducible components of $F=1$ and denote $ \omega = a L df - c f dL$.  Notice
that $\gamma$ and $\gamma'$ have the same desingularization. We have
$$ \nu_{0} ({\mathcal F}) = \nu_0 (\omega=0) = \nu_0 (\xi)= q_{g-1} = \mu (\gamma) .$$
The above equality implies that $D_k$ is the unique bad divisor of $\gamma$ by
Equation (\ref{equ:calam2}).  Moreover, $D_k$ is the unique non-invariant divisor of
$\pi^{-1}(0,0)$ by Proposition \ref{pro:minlam}.

As an illustration of this method,
consider $f= y^{2} - x^{3}$, $L=x$, $a=4$, $c=13$. Let  $\gamma$ be the
curve with Puiseux parametrization $(t^{4}, t^{6} \sqrt{1+t})$.
It satisfies $g=2$, $q_1=2$ , $q_2=4$ and $F (t^{4}, t^{6} \sqrt{1+t}) \equiv 1$.
The foliation with first integral $F$ is given by the vector field
$$
X= 8 x y \frac{\partial}{\partial x} + (13 y^{2} - x^{3})  \frac{\partial}{\partial y},
$$
for which $\gamma$ is invariant and satisfies $\nu_{0} (X) = 2 = q_1 = \mu (\gamma)$.
\end{rem}

Finally, an immediate consequence of the Main Theorem is that the genus of an
invariant curve is bounded by a function of the multiplicity of the foliation.
\begin{cor}
Let $\gamma$ be an irreducible curve of a germ of singular holomorphic foliation ${\mathcal F}$
defined in the neighborhood of $(0,0)$ in ${\mathbb C}^{2}$. Then we get
$\nu_{0} ({\mathcal F}) \geq 2^{g-1}$ and $g \leq 1 + \log_{2} \nu_{0} ({\mathcal F})$.
\end{cor}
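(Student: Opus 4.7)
The plan is to derive both bounds immediately from the Main Theorem together with one elementary observation on the integers $q_j$. First I would invoke the Main Theorem, which states $\nu_0(\mathcal{F}) \geq q_{g-1}$ whenever $\gamma$ is singular. To clear up the cases where $\gamma$ is regular or $g=1$, I would note that $2^{g-1} \leq 1 \leq \nu_0(\mathcal{F})$ holds trivially since $\mathcal{F}$ is singular, so only $g \geq 2$ needs attention.

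Next I would recall the notation $q_j = r_1 r_2 \cdots r_j$ established before Proposition \ref{pro:calam}, where the $r_i$ arise from the Puiseux characteristic data. The key elementary fact is that $r_i \geq 2$ for every $1 \leq i \leq g$: indeed, each $r_i$ is defined so that the sequence of denominators $q_0 = 1 < q_1 < \cdots < q_g = n$ is strictly increasing, which forces $r_i = q_i/q_{i-1} \geq 2$ since all $q_i$ are positive integers. This immediately yields
\begin{equation*}
q_{g-1} = r_1 r_2 \cdots r_{g-1} \geq 2^{g-1}.
\end{equation*}
Combining with the Main Theorem gives $\nu_0(\mathcal{F}) \geq q_{g-1} \geq 2^{g-1}$, which is the first assertion.

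For the second assertion, I would simply apply $\log_2$ to the inequality $\nu_0(\mathcal{F}) \geq 2^{g-1}$, obtaining $\log_2 \nu_0(\mathcal{F}) \geq g-1$, equivalently $g \leq 1 + \log_2 \nu_0(\mathcal{F})$. There is no genuine obstacle in this proof: the substantive work has already been carried out in the Main Theorem, and this corollary is essentially a repackaging after the observation $q_{g-1} \geq 2^{g-1}$ (which is, in fact, already recorded as part of the Main Theorem's statement). The only thing to double-check is that the inequality $r_i \geq 2$ is justified from the definition of the partial multiplicities, which follows as above from strict monotonicity of the $q_i$.
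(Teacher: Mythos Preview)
Your proposal is correct and matches the paper's approach: the paper states the corollary as ``an immediate consequence of the Main Theorem'' without giving a separate proof, since the inequality $\nu_0(\mathcal{F}) \geq q_{g-1} \geq 2^{g-1}$ is already contained in the statement of the Main Theorem itself, and the second bound is its logarithm. Your added discussion of the trivial cases $g \leq 1$ and the justification of $r_i \geq 2$ simply makes explicit what the paper leaves implicit.
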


\bibliography{sb}
\end{document}